\let\oldReturn\Return
\renewcommand{\Return}{\State\oldReturn}
\numberwithin{equation}{section}
\DeclareMathOperator*{\argmin}{arg\,min}
\DeclareMathOperator*{\argmax}{arg\,max}
\newtheorem{assumption}{Assumption}
\newtheorem{remark}{Remark}[section]
\numberwithin{equation}{section} 
\newtheorem{proposition}{Proposition}[section]
\newtheorem{lemma}{Lemma}[section]
\newtheorem{theorem}{Theorem}[section]
\newtheorem{corollary}{Corollary}[section]
\providecommand{\keywords}[1]{\textbf{\textit{Keywords:}} #1}
\date{}
\def\E{\mathbb{E}}
\def\X{\mathbb{X}}
\def\U{\mathbb{U}}
\begin{document}

\title{Decomposition of convex high dimensional aggregative 
stochastic control problems\thanks{
The first, second, third and fifth author thank the FiME Lab (Institut Europlace de Finance). The third author was supported by the PGMO project ``Optimal control of conservation equations", itself supported by iCODE(IDEX Paris-Saclay) and the Hadamard Mathematics LabEx.
\\
$^{1}$Adrien Seguret is with PSL Research University, Universite Paris-Dauphine, CEREMADE, Place de Lattre de Tassigny, 75016 Paris, France,
and  with Finance for Energy Market Research Centre (FIME), Paris, France,
 and with Osiris, EDF R\&D, 91120 Palaiseau, France
        {\tt\small adrien.seguret@edf.fr}\\%
$^{2}$ Clémence Alasseur is with
               Osiris, EDF R\&D, 91120 Palaiseau, France and  with Finance for Energy Market Research Centre (FIME)
             {\tt\small clemence.alasseur@edf.fr}\\
$^{3}$J. Frédéric Bonnans is with Disco Team, L2S, CentraleSupelec/Université Paris-Saclay and Inria-Saclay, France
{\tt\small Frederic.Bonnans@inria.fr}\\
$^{4}$Antonio De Paola is with Department of Electrical and Electronic Engineering, Imperial College London, London, UK
 {\tt\small antonio.de-paola09@imperial.ac.uk}\\
 $^{5}$Nadia Oudjane is with Osiris, EDF R\&D, 91120 Palaiseau, France and  with Finance for Energy Market Research Centre (FIME), Paris, France
        {\tt\small nadia.oudjane@edf.fr}\\
$^6$ Vincenzo Trovato is with Department of Civil, Environmental and Mechanical Engineering, University of Trento, Trento, Italy  and with Department of Electrical and Electronic Engineering, Imperial College London, London, UK 
 {\tt\small vincenzo.trovato@unitn.it}\\
\today}
}
\author{Adrien Seguret$^{1}$ \and Clemence Alasseur$^{2}$  \and {J. Frédéric Bonnans}$^{3}$ \and Antonio De Paola$^{4}$ \and Nadia Oudjane$^{5}$ \and Vincenzo Trovato$^{6}$
}

\maketitle

\begin{abstract}
We consider the framework of convex high dimensional stochastic control problems, in which the controls are aggregated in the cost function. As first contribution, we introduce a modified problem, whose optimal control is under some reasonable assumptions an $\varepsilon$-optimal solution of the original problem. As
second contribution, we present a decentralized algorithm whose convergence to the solution of the modified problem is established. Finally, we study the application of the developed tools in an engineering context, studying a coordination problem for large populations of domestic thermostatically controlled loads (TCLs).
\end{abstract}

\keywords{Stochastic optimization, Lagrangian decomposition, Uzawa's algorithm, Stochastic gradient, Thermostatically controlled loads}

\section{Introduction}

The present article aims at solving a high dimensional stochastic control problem $(P_{1})$ involving a large number $n$ of agents indexed by $i\in \{1,\cdots, n\}$, of the form: 
\begin{equation}
\begin{array}{r l}
   (P_{1})  &  
   \left\{
\begin{array}{l}
{\displaystyle \min_{u\in\mathcal{U}} J(u)}\\
{\displaystyle 
J(u):=
\E\left[F_0(\frac{1}{n}\sum_{i=1}^n u^i(\omega^i,\omega^{-i}))
+\frac{1}{n}\sum_{i=1}^{n} 
G_{i}(u^i(\cdot,\omega^{-i}), \omega^i)\right]}.
\end{array}
\right.
\end{array}
\label{P1}
\end{equation}

Here the noise 
$\omega := (\omega^1, \ldots , \omega^n)$
belongs to $\Omega := \Pi_{i=1}^n \Omega^i$, where $(\Omega^i, \mathcal{F}^i, \mu^i)$ is a probability space, and 
$(\Omega, \mathcal{F}, \mu)$ is the corresponding product probability space. 
Let $\omega^{-i}:=(\omega^1,\ldots,\omega^{i-1},\omega^{i+1},\ldots,\omega^{n})$ denote an element of the space $\Omega^{-i} := \Pi_{j=1,j\neq i}^n \Omega^j$. The associated product probability space is $(\Omega^{-i},\mathcal{F}^{-i},\mu^{-i})$, where $\mathcal{F}^{-i}:=\otimes_{j=1,j\neq i}^n \mathcal{F}^{j}$ and $\mu^{-i} := \Pi_{j=1,j\neq i}^n \mu^j$. Each decision variable $u^i$ is a random variable 
(i.e. is $\mathcal{F}$-measurable), square summable with value in a Hilbert space $\mathbb{U}$
so that $u := (u^1,\ldots, u^n)$ belongs to $L^2(\Omega, (\mathbb{U})^n )$. 
The function
$\omega^i \mapsto u^i(\omega^i,\omega^{-i})$
is denoted by $u^i(\cdot,\omega^{-i})$
and is a.s. (in $\omega^{-i}$) $\mathcal{F}^i$-measurable and  belongs to $L^2(\Omega^i, \mathbb{U} )$.
Also, $\mathcal{U} := \Pi_{i=1}^n \mathcal {U}_i$ where $\mathcal {U}_i$ is, for $i=1$ to $n$, 
a closed convex subset of $L^2(\Omega, \mathbb{U})$.
In the application to dynamical problems, the constraint $u^i \in \mathcal{U}_i$
includes the constraint of adaptation of $u^i$ to some filtration.
If each $u^i$ is a random variable of $\omega^i$, for $i=1$ to $n$,
we say that $u$ is a decentralized decision variable. 

The cost function is the sum of a \textit{coupling term} $F_0: \mathbb{U} \to \mathbb{R}$, 
function of the \textit{aggregate strategies} $\frac{1}{n} \sum_{i=1}^n u^i$,
and  \textit{local terms} functions of the local decision $u^i$ and local noise $\omega^i$
with $G_i : L^2(\Omega^i, \mathbb{U}) \times  \Omega^i \to \mathbb{R}$. This framework aims at containing stochastic optimal control problems, where the states of the agents are driven by independent noises (see equations \eqref{def_g_sto_cont} and \eqref{evolution SDE} developed in Section \ref{examples_sto_control}).

\subsection{Motivations}

This work is motivated by its
potential applications  to distributed coordination of large populations of small agents, with relevant real-world  implications in different sectors, from communication networks to power systems. 
The application developed in this paper deals with
the coordination of flexible electrical appliances, to support power system operation in a context of increasing penetration of renewables.
Among other appliances, thermostatically controlled loads (e.g. refrigerators, air conditioners etc.) have been investigated in the last few years, for their intrinsic flexibility and potential for network support.
Several papers have already assessed the potential of
demand-side response actions for frequency response services of TCLs \cite{short2007stabilization} and how 
the population recovers from significant perturbations \cite{chertkov2017ensemble}. The coordination of TCLs can be performed in a centralized way, like in \cite{hao2014aggregate}. However, this approach raises concerns with respect to the communication requirements and  customer privacy. A common objective can be reached in a fully distributed approach, like in 
\cite{trovato2016leaky}, where each TCL is able to calculate its own actions (ON/OFF switching) to pursue a common objective. 
This paper is related to the work of De Paola \textit{et al.} \cite{depaola2019mean}, where each agent represents a flexible TCL device. 
In \cite{depaola2019mean}
a distributed solution is presented
for the operation of a population of $n=2\times 10^7$  refrigerators providing frequency support and load shifting. They adopt a game-theory
framework, modelling the TCLs as price-responsive
rational agents that schedule their energy consumption and
allocate their frequency response provision in order to minimize their operational costs. 
The potential practical application of our work also considers a large population of TCLS which, as extension to \cite{depaola2019mean}, have stochastic dynamics. The proposed approach is able to minimize the overall system costs in a distributed way, with each TCL determining its optimal power consumption profile in response to price signals.

\subsection{Related literature}

The considered problem belongs to the class of stochastic control: looking for strategies
minimizing the expectation of an objective function under specific constraints.
One of the main approaches proposed in the literature to tackle this problem is to use random trees: this  consists in replacing the almost sure constraints, induced by non-anticipativity, by a finite number of constraints, in order to get a finite set of scenarios (see \cite{higle2013stochastic} and \cite{ruszczynski2003stochastic}).
Once the tree structure is built, the problem is solved by different decomposition methods such as scenario decomposition \cite{rockafellar1991scenarios}  or dynamic splitting \cite{salinger1997splitting}. The main objective of the scenario method is to reduce the problem to an approximated deterministic one. 
The present paper focuses on high dimensional noise problems with a large number of time steps, for which this approach is not feasible.

The idea of reducing a single high-dimensional problem to a large number of smaller problems with lower dimension has been widely studied in the deterministic case. In deterministic and stochastic problems  there is the possibility of using time decomposition thanks to the Dynamic Programming Principle \cite{bertsekas2004stochastic}, taking advantage of the Markov property of the system.
However, this method requires a specific time structure of the cost function and is not suitable for
problems with high-dimensional state spaces.
Under continuous linear-convex assumptions, one can deal with the curse of dimensionality by using the Stochastic Dual Dynamic Programming algorithm (SDDP) \cite{pereira1991multi} to get upper and lower bounds of the value function, using polyhedral approximations.
The almost-sure convergence of a broad class of SDDP algorithms has been proven \cite{philpott2008convergence}, and complexity of the algorithm can be estimated, in the specific case of Lipschitz continuous value function \cite{lan2020complexity} or by using  a regularization of the value functions \cite{zhang2022stochastic}.
In \cite{leclere2020exact,pacaud2018decentralized}, a stopping  criteria  based  on  a  dual version of SDDP, which gives a deterministic upper-bound for the primal problem, is proposed. SDDP is well-adapted for medium sized population problems ($n\leq 30$), whereas it fails for problems with   large populations ($n>1000$) such as the ones considered in this paper. To tackle this type of high dimensional problems, it is natural to investigate decomposition techniques in the spirit of the Dual Approximation Dynamic Programming (DADP)
\cite{girardeau2010solving,leclere2014contributions}.
This approach is characterized by a price decomposition of the problem, where the stochastic constraints are projected on subspaces such that the associated Lagrangian multiplier is adapted for dynamic programming. Then the optimal multiplier is estimated by implementing Uzawa's algorithm. To this end in  \cite{leclere2014contributions},  the Uzawa's algorithm, formulated in a Hilbert setting, is extended to a Banach space. DADP has been applied in different cases, such as storage management problem for electrical production in  \cite{barty2010decomposition,girardeau2010solving} and hydro valley management \cite{carpentier2018stochastic}. The idea of approaching the primal and dual problems by restricting or relaxing the set of decision variables has also been proposed in the context of stochastic programming \cite{brown2010information,kuhn2011primal} to provide upper and lower bounds for the considered problem. In the proposed paper, in the same vein as DADP, we propose a price  decomposition approach restricted to deterministic prices. This new approach takes advantage of the large population number in order to introduce an auxiliary problem where the coupling term is purely deterministic.

\subsection{Contributions}

The numerical difficulty of Problem $(P_1)$ is related to the randomness of the aggregate term $\textstyle\frac{1}{n}\sum_{i=1}^nu^i$ involved in the coupling function $F_0$.
Let us introduce the set of decentralized controls:
\begin{equation}
    \hat{\mathcal{U}}:=\prod_{i=1}^n \hat{\mathcal{U}_i},  
    \mbox{where }\hat{\mathcal{U}_i}:=\{u^i\in\mathcal{U}_i\,| \,
u^{i}\, \mbox{ is }\mathcal{T}^1\otimes \ldots \otimes \mathcal{F}^i \otimes  \mathcal{T}^{i+1}\otimes \ldots \otimes \mathcal{T}^n-\mbox{measurable}\},
\end{equation}
where $\mathcal{T}^j$ is the trivial $\sigma$-field $\{\emptyset, \Omega^j\}$. 
Note that by construction,
we can identify $\hat{\mathcal{U}_i}$ with $\mathcal{F}^i-$measurable functions defined on $\Omega^i$. In addition, two decentralized controls $u^i\in \hat{\mathcal{U}}_i$ and $u^j\in \hat{\mathcal{U}}_j$, $i\neq j$, are independent random variables.
If the control $u$ is decentralized, since the variance of the sum of independent random variables is equal to the sum of the variances of these random variables,  the aggregate term $\textstyle \frac{1}{n}\sum_{i=1}^nu^i$ can be approximated by  $\textstyle\frac{1}{n}\sum_{i=1}^n\E\,u^i$ when $n$ is large enough.
Let us consider then the following approximation of Problem $(P_1)$:
\begin{equation}
\label{eq:J modified}
\begin{array}{r l}
   (P_{2})  &  
   \left\{
\begin{array}{l}
 \min_{u\in\mathcal{U}} \tilde J(u)
 \\
\tilde J(u):= F_{0}\left(\frac{1}{n}\sum_{i=1}^n \E\, u^i \right) 
+\frac{1}{n}\E\left[\sum_{i=1}^n  
G_{i}(u^i(\cdot,\omega^{-i}), \omega^i)\right].
\end{array}
\right.
\end{array}
\end{equation}
A first step consists  in showing that, without loss of optimality in Problem ($P_2$), one can restrict the control set $\mathcal{U}$ to $\hat{\mathcal{U}}$.

Theorem \ref{independence of ustar} states the equivalence between Problem $(P_2)$ and its decentralized version $(\hat{P}_2)$ defined by:
\begin{equation}
\label{P_2_hat}
\begin{array}{r l}
   (\hat{P}_{2})  &  
   \left\{
\begin{array}{l}
 \min_{u\in\hat{\mathcal{U}}} \tilde J(u)
 \\
\tilde J(u):= F_{0}\left(\frac{1}{n}\sum_{i=1}^n \E\,u^i\right) 
+\frac{1}{n}\E\left[\sum_{i=1}^n  
G_{i}(u^i, \omega^i)\right].
\end{array}
\right.
\end{array}
\end{equation}
Through the article, the circumflex symbol $\hat{}$ will be used to denote minimization problems w.r.t. decentralized controls.
Problem $(\hat{P}_{2})$ 
can be written as:
\begin{equation}\label{eq:P2v}\begin{array}{r l}
   (\hat{P}_{2}')  &  \left\{
    \begin{array}{l}
        \underset{u\in\hat{\mathcal{U}},  v\in \mathbb{U} }{\min}\bar{J}(u,v), \\
    \bar{J}(u,v) :=
    F_{0}(v)+\frac{1}{n}\mathbb{E}\left[\sum_{i=1}^{n}
   G_{i}(u^{i},\omega^i)
    \right],
    \\
    \mbox{ s.t  } \,  g(u,v)=0,\\ 
    \end{array}
    \right.
    \end{array}
    \end{equation}  
where $g: \mathcal{U}\times \mathbb{U} \rightarrow \mathbb{U}$
is defined by 
\begin{equation}
    \label{def_g}
    g(u,v):=\frac{1}{n}\sum_{i=1}^{n}\mathbb{E}\,u^{i} -v.
\end{equation}
Observe that, for any $u^i\in \hat{\mathcal{U}}_i$, $ G_{i}(u^{i},\cdot)$ is independent of $\mathcal{F}^{-i}$.
As a first contribution, this paper shows that under some convexity and regularity assumptions on $F_{0}$ and $(G_{i})_{i\in\{1,\ldots,n\}}$, any solution of Problem $(P_{2})$ is an $\varepsilon_{n}$-solution of $(P_{1})$, with $\varepsilon_{n}\rightarrow 0$ when $n\to \infty$.
In addition, we will see that an approach of price decomposition for $(\hat{P}_2)$, based on the formulation $(\hat{P}_{2}')$, is tractable for dynamical problems, since the problem of minimizing the Lagrangian with deterministic dual variables can be decomposed in subproblems which are solvable by Dynamic Programming.

Since computing the dual cost of $ (\hat{P}_{2}') $ is expensive, we propose  \textit{Stochastic Uzawa} and \textit{Sampled Stochastic Uzawa} algorithms relying on the Robbins-Monroe algorithm, in the spirit of the stochastic gradient. Their convergence is established, relying on the proof provided by \cite{geiersbach2019projected} for the convergence of the stochastic gradient in a Hilbert space.
We check the effectiveness of the \textit{Stochastic Uzawa} algorithm on a linear quadratic Gaussian framework,
and we apply the \textit{Sampled Stochastic Uzawa} algorithm to a model of power system, inspired by the work of A. De Paola \textit{et al.} \cite{depaola2019mean}.

\subsection{Assumptions}

Various assumptions needed in the article are listed in this subsection.
\begin{assumption}
\label{Assum_framework}
\begin{enumerate}[label={\normalfont (\roman*)}]
\item Each set $\mathcal{U}_i$ is bounded, i.e. there exists $M>0$ such that $\E \Vert u^i \Vert_{\mathbb{U}}^2 \leq M^2$, for $i\in \{1,\ldots,n\}$.  

\item \label{convexity:Gi}The function $u^i\mapsto G_i(u^i(\cdot,\omega^{-i}),\omega^i)$ is a.s. non negative, convex and lower semi continuous (l.s.c. for short). And, for any $u^i\in \mathcal{U}_i$, the function $\omega\mapsto G_i(u^i(\cdot,\omega^{-i}),\omega^i)$ is measurable.

\item \label{F_0_lsc} The function $F_0$ is l.s.c. and proper.

\item\label{P1:feasible}
Problem $(P_1)$ is feasible.
\end{enumerate} 
\end{assumption}

\begin{assumption}\label{F_0_conv}
The function $F_0$ is convex.
\end{assumption}

\begin{assumption}
\label{lip der assu}
The function $F_{0}$ is Gâteaux differentiable with $c$-Lipschitz derivative.
\end{assumption}

\begin{assumption}\label{assumption_conv_algo}
\begin{enumerate}[label={\normalfont (\roman*)}]
\item \label{strict_gi}The function $u^i\mapsto G_i(u^i,\omega^i)$ is for a.a. $\omega^i\in\Omega^i$ strictly convex on $\hat{\mathcal{U}}_i$.
\item \label{quad_growth}The function $F_0$ 
has at least quadratic growth, i.e. there exist $C_1,C_2>0$ such that for any $v\in \mathbb{U}$:
\begin{equation*}
    C_1\Vert v \Vert_\mathbb{U}^2-C_2\leq F_0(v).
\end{equation*}
\end{enumerate}
\end{assumption}
\begin{remark}
By Lemma \ref{quad_growth_lemma} in Appendix \ref{append}, if $F_0$ satisfies Assumption \ref{lip der assu} then, $F_0$ has at most quadratic growth, i.e. there exists $C>0$ such that for any $v\in \mathbb{U}$ one has:
\begin{equation*}
     F_0(v)\leq C(\Vert v \Vert_\mathbb{U}^2+1).
\end{equation*}
\end{remark}
We denote by $\{\rho_k\}_{k\in \mathbb{N}^\ast}$ the sequence of step sizes used in the \textit{Stochastic Uzawa} and \textit{Sampled Stochastic Uzawa} algorithms in Section \ref{proposed algo}.
\begin{assumption}
\label{assumption 1}
The sequence $\{\rho_{k}\}_{k\in \mathbb{N}^\ast}$ is such that: $\rho_{k}> 0$, 
$\sum_{k=1}^{\infty}\rho_{k}=\infty$ and $\sum_{k=1}^{\infty} (\rho_{k})^{2}<\infty$.
\end{assumption}
Note that a sequence of the form $\rho_{k}:=\frac{a}{b+k}$, with $(a,b)\in\mathbb{R}^{\ast}_{+}\times\mathbb{R}_{+}$, satisfies Assumption \ref{assumption 1}.

\begin{assumption}
\label{strong conv hyp}
\begin{enumerate}
[label={\normalfont (\roman*)}]
\item \label{strong conv F0}
$F_{0}$ is strongly convex.
\item  \label{strong conv Fi}
There exists $\delta>0$ such that for any $i\in\{1,\ldots,n\}$ and for a.a. $\omega^i\in \Omega^i$, the function $\hat{\mathcal{U}}_i\ni u^{i}\mapsto \,G_{i}(u^{i},\omega^i)$ is strongly convex with modulus of convexity greater or equal to $\delta$.
\end{enumerate}
\end{assumption}
Obviously, Assumption \ref{strong conv hyp} is stronger than Assumption \ref{assumption_conv_algo}.

\section{Approximating the optimization problem}
\label{approx_results}
In this section, the link between the values of problems $(P_1)$ and $(P_2)$ is analyzed.

\begin{lemma}
\label{existence solution P1}
Let Assumptions \ref{Assum_framework} and \ref{F_0_conv} hold.
Then Problem $(P_1)$ has a solution, i.e. $J$ reaches its minimum over $\mathcal{U}$.
\end{lemma}

\begin{proof}
The existence of a minimum is proved by considering a minimizing sequence (which exists since ($P_1$) is feasible) $\{u_{k}\}$ of $J$ over $\mathcal{U}$. The set $\mathcal{U}$ being bounded and weakly close, there exists a subsequence $\{u_{k_{\ell}}\}$ which weakly converges to a certain $u^{\ast}\in \mathcal{U}$. Using Assumptions \ref{Assum_framework}.\ref{convexity:Gi} and convexity of $F_{0}$, it follows that $\lim\inf J(u_{k_{\ell}})\geq J(u^{\ast})$ and thus $u^{\ast}$ is a solution of $(P_{1})$. 
\end{proof}
We obtain the following Corollary about $(P_2)$.
\begin{corollary}\label{corro_value_P_2}
If Assumptions \ref{Assum_framework} and \ref{F_0_conv} are satisfied,
then Problem $(P_2)$ has a solution and its value is lower or equal to the value of Problem $(P_1)$ i.e:
\begin{equation*}
\underset{u\in\mathcal{U}}{\inf}\,\tilde{J}(u)
\leq 
\underset{u\in\mathcal{U}}{\inf}\,{J}(u).
\end{equation*}
\end{corollary}
\begin{proof}
Assumption \ref{Assum_framework}.\ref{P1:feasible} and convexity of $F_0$ imply that ($P_2$) is feasible. By using the same techniques as in the proof of Lemma \ref{existence solution P1}, one can prove that $(P_2)$ admits a solution. Using the convexity of $F_0$ and Jensen's inequality, one has for any centralized control $u\in\mathcal{U}$:
\begin{equation*}
F_{0}(\frac{1}{n}\sum_{i=1}^n \E\,u^i)
\leq \E[F_{0}(\frac{1}{n}\sum_{i=1}^{n} u^i)],
    \label{Jensen ineq used}
\end{equation*}
and the conclusion follows from the definition of $(P_1)$ in \eqref{P1} and $(P_2)$ in \eqref{eq:J modified}
\end{proof}

We have the following key result.
\begin{theorem} 
\label{independence of ustar}
If Assumption \ref{Assum_framework} is satisfied, then the \textit{decentralized} Problem $(\hat{P}_2)$
has the same value as the \textit{centralized} Problem $({P}_2)$
i.e.:
\begin{equation}
\label{min on A}
\underset{u\in\hat{\mathcal{U}}}{\inf}\,\tilde{J}(u)
=
\underset{u\in\mathcal{U}}{\inf}\,\tilde{J}(u).
\end{equation}
\end{theorem}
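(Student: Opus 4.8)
The plan is to prove the nontrivial inequality $\inf_{u\in\mathcal{U}}\tilde{J}(u) \geq \inf_{u\in\hat{\mathcal{U}}}\tilde{J}(u)$, since the reverse inequality is immediate from $\hat{\mathcal{U}}\subset\mathcal{U}$. The key observation is that $\tilde{J}(u)$ depends on $u$ only through the collection of marginal laws together with the conditional expectations $\E(u^i)$ and the values $G_i(u^i(\cdot,\omega^{-i}),\omega^i)$; crucially, the coupling term $F_0\bigl(\tfrac1n\sum_i\E(u^i)\bigr)$ only sees the full expectation of each $u^i$, not its joint dependence on $\omega^{-i}$. So given any $u\in\mathcal{U}$, I would construct, for each $i$, a decentralized competitor $\hat{u}^i\in\hat{\mathcal{U}}_i$ that does at least as well.

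First I would fix $i$ and work conditionally: for a.e. $\omega^{-i}$, the map $\omega^i\mapsto u^i(\omega^i,\omega^{-i})$ lies in $L^2(\Omega^i,\mathbb{U})$, and by Assumption~\ref{Assum_framework}\ref{convexity:Gi} the function $w\mapsto G_i(w,\omega^i)$ is convex and l.s.c. on $L^2(\Omega^i,\mathbb{U})$. I would then average over $\omega^{-i}$: define $\hat{u}^i(\cdot) := \int_{\Omega^{-i}} u^i(\cdot,\omega^{-i})\,d\mu^{-i}(\omega^{-i}) = \E^{-i}[u^i]$, the conditional expectation of $u^i$ given $\mathcal{F}^i$. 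This $\hat{u}^i$ is $\mathcal{F}^i$-measurable by construction. Two facts must then be checked: (a) $\hat{u}^i\in\mathcal{U}_i$, which follows because $\mathcal{U}_i$ is closed, convex, and bounded in $L^2(\Omega,\mathbb{U})$, hence closed under conditional expectation (a standard consequence: conditional expectation is a contraction and the closed convex set is stable under it, e.g. via Jensen for vector-valued conditional expectation or a separation argument); (b) $\E(\hat{u}^i) = \E(u^i)$, which is immediate from the tower property, so the coupling term $F_0\bigl(\tfrac1n\sum_i\E(\hat{u}^i)\bigr) = F_0\bigl(\tfrac1n\sum_i\E(u^i)\bigr)$ is unchanged.

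The remaining and main point is the local terms: I must show $\E\bigl(G_i(\hat{u}^i,\omega^i)\bigr) \leq \E\bigl(G_i(u^i(\cdot,\omega^{-i}),\omega^i)\bigr)$ for each $i$. Write the right-hand side as $\int_{\Omega^{-i}}\bigl[\int_{\Omega^i} G_i(u^i(\cdot,\omega^{-i}),\omega^i)\,d\mu^i\bigr]d\mu^{-i}$; the inner integral is $\E^i[G_i(u^i(\cdot,\omega^{-i}),\cdot)]$, an expectation over $\omega^i$ of a convex function of the $L^2(\Omega^i,\mathbb{U})$-valued argument $u^i(\cdot,\omega^{-i})$. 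Applying Jensen's inequality in the Banach space $L^2(\Omega^i,\mathbb{U})$ (or, more carefully, first swapping so that $\E^i G_i(\cdot,\omega^i)$ is itself a convex l.s.c.\ functional of the $L^2$-element, then using Jensen for the $\mu^{-i}$-average), I get $\int_{\Omega^{-i}}\E^i[G_i(u^i(\cdot,\omega^{-i}),\cdot)]\,d\mu^{-i} \geq \E^i\bigl[G_i\bigl(\int_{\Omega^{-i}} u^i(\cdot,\omega^{-i})\,d\mu^{-i},\cdot\bigr)\bigr] = \E(G_i(\hat{u}^i,\omega^i))$. Summing over $i$ gives $\tilde{J}(u)\geq\tilde{J}(\hat{u})$ with $\hat{u}\in\hat{\mathcal{U}}$, hence the inequality on infima, and the theorem follows.

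The step I expect to be the main obstacle is the careful justification of Jensen's inequality for the (possibly only l.s.c., extended-real-valued, infinite-dimensional) integrand and the measurability of the map $\omega^{-i}\mapsto u^i(\cdot,\omega^{-i})$ as an $L^2(\Omega^i,\mathbb{U})$-valued random element, so that the Bochner integral defining $\hat{u}^i$ and the ensuing Jensen inequality are legitimate. The finiteness side is helped by Assumption~\ref{Assum_framework}(i) (boundedness keeps everything in $L^2$) and nonnegativity of $G_i$; the convex-analytic side is standard once one represents the convex l.s.c.\ functional as a supremum of affine functions and exchanges supremum with integral.
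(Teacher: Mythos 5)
Your proposal is correct and follows essentially the same route as the paper: define the decentralized competitor as the conditional expectation $\tilde{u}^i(\omega^i)=\E[u^i(\omega^i,\omega^{-i})\mid\omega^i]$, note that the coupling term $F_0\bigl(\tfrac1n\sum_i\E(u^i)\bigr)$ is unchanged, and apply Jensen's inequality in $L^2(\Omega^i,\mathbb{U})$ to the convex local costs $G_i(\cdot,\omega^i)$. Your extra care about $\tilde{u}^i$ remaining in $\mathcal{U}_i$ (stability of the closed convex set under conditional expectation) and about the measurability/Bochner-integrability issues is a legitimate refinement of a point the paper leaves implicit, but it is not a different argument.
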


\begin{proof}
Since $\hat{\mathcal{U}}\subset\mathcal{U}$, it is immediate that $\underset{u\in\mathcal{U}}{\inf}\,\tilde{J}(u)\leq \underset{u\in\hat{\mathcal{U}}}{\inf}\,\tilde{J}(u)$.

Fix $i\in\{1,\ldots,n\}$, using the definition of conditional expectation, we define $\tilde{u}^i\in L^2(\Omega^i, \mathbb{U})$ for any $u^i\in \mathcal{U}_i$ by:
\begin{equation*}
\tilde{u}^i(\omega^i):=\mathbb{E}[u^i(\omega^i, \omega^{-i})\vert \omega^i]=\int_{\Omega^{-i}} u^i(\omega^i, \omega^{-i})\mathrm{d}\mu^{-i}(\omega^{-i})\quad \mbox{ for any }\omega_i \in \Omega^i.
\end{equation*}
Since $G_i$ is a.s. convex w.r.t. the first variable, Jensen's inequality gives:
\begin{equation}
\label{ineq for indep}
G_i(\tilde{u}^i,\omega^i)\leq 
\int_{\Omega^{-i}} G_i(u^i(\cdot, \omega^{-i}),\omega^i)\mathrm{d}\mu^{-i}(\omega^{-i})=\E[G_i(u^i(\cdot,\omega^{-i}),\omega^i)\vert \omega^i]\quad\mbox{a.s.}
\end{equation}
On the other hand \sloppy $(u^{1},\ldots,u^{n})\mapsto F_{0}(\frac{1}{n}\sum_{i=1}^{n} \mathbb{E}\,u^{i})$ is invariant when taking the conditional expectation, thus:
\begin{equation*}
F_{0}\left(\frac{1}{n}
\sum_{i=1}^{n}\mathbb{E}\,u^{i}\right)
=F_{0}\left(\frac{1}{n}
\sum_{i=1}^{n}\mathbb{E}\,\tilde{u}^i \right).
\end{equation*}
Taking the expectation of \eqref{ineq for indep}, we have $\underset{u\in\hat{\mathcal{U}}}{\inf}\,\tilde{J}(u)\leq\underset{u\in\mathcal{U}}{\inf}\,\tilde{J}(u)$, and the conclusion follows.
\end{proof}

\begin{remark}
In the applications to stochastic control problems (in discrete and continuous time) 
we have the constraint of having progressively measurable control policies. 
Since the set of progressively measurable policies is 
closed and convex, this enters in the above framework.
In particular, the decentralized policy $\tilde{u}^i$
constructed in the above proof is progressively measurable.
\end{remark}

\begin{remark}
By Theorem \ref{independence of ustar}, for any $\varepsilon>0$ there exists an $\varepsilon$-optimal solution of Problem ($P_2$) that is a decentralized control.
\end{remark}

Before stating the next result, we need to introduce Problem $(\hat{P}_1)$, that corresponds to the minimization of $J$, defined in \eqref{P1}, over the set of decentralized controls $\hat{\mathcal{U}}$:
\begin{equation*}
  (\hat{P}_{1}) \left\{
\begin{array}{l}
\displaystyle \min_{u\in\hat{\mathcal{U}}}\, J(u).
\end{array}
\right.
\end{equation*}

\begin{proposition}
\label{optimality th P1 loc}
If Assumption \ref{Assum_framework} is satisfied and $F_{0}$ is Lipschitz continuous with constant $\gamma$, then any solution of $(\hat{P}_2)$ is an $\varepsilon$-optimal solution of $(\hat{P}_1)$ and, conversely, any solution of $(\hat{P}_1)$ is an $\varepsilon$-optimal solution of $(\hat{P}_2)$, with $\varepsilon=2\gamma M/\sqrt{n}$.
\end{proposition}
\begin{proof}
Since $F_0$ is Lipschitz continuous with Lipschitz constant $\gamma$, it holds for any $x,y\in \mathbb{U}$: $\vert F_{0}(x)-F_{0}(y) \vert \leq \gamma\Vert x-y\Vert_{ \mathbb{U}}$. 
We set for any $u\in\mathcal{U}$:
\begin{equation}
\label{def u hat}
\textbf{u}^{i}:=u^{i}-\mathbb{E}\,u^{i}.
\end{equation}
Using the Lipchitz continuity of $F_0$, one has for any $ u\in\hat{\mathcal{U}}$:
\begin{equation*}
\begin{array}{cl}
     \left\vert  \E\Big[F_{0}\big(\frac{1}{n}\sum_{i=1}^{n} u^i\big)
  -F_{0}\big(\frac{1}{n}\sum_{i=1}^n \E\,u^i\big)\Big]  \right\vert
    & \leq
  \E \left\vert F_{0}\big(\frac{1}{n}\sum_{i=1}^{n} u^i\big)
  -F_{0}\big(\frac{1}{n}\sum_{i=1}^n \E\,u^i \big) \right\vert \\
  & \leq \frac{\gamma}{n}\mathbb{E}\,\Vert\sum_{i=1}^n \textbf{u}^i\Vert_{ \mathbb{U}}.
 \end{array}
 \end{equation*}
Using the Jensen's inequality, for any  $u\in\hat{\mathcal{U}}$, the mutual independence of the centered variables $\textbf{u}_{i}$ and $\textbf{u}_{j}$ for any $j\neq i$ and $\E\Vert \textbf{u}_{i}\Vert_{\mathbb{U}}^2\leq M^2$, we get:
 \begin{equation}
   \frac{\gamma}{n}\mathbb{E}\,\Vert\sum_{i=1}^n \textbf{u}^i\Vert_{ \mathbb{U}} \leq
  \frac{\gamma}{n}\mathbb{E}[\Vert\sum_{i=1}^n \textbf{u}^i
 \Vert^{2}_{\mathbb{U}}]^{\frac{1}{2}} 
 \leq\frac{\gamma}{n^{\frac{1}{2}}}M.
\label{ineq expectation 2}
 \end{equation}

 Let $\hat{u}$ denote a minimizer of $(\hat{P}_2)$, then using \eqref{ineq expectation 2} for the first and last inequality, for any $u\in\hat{\mathcal{U}}$ it holds:
 \begin{equation}
 J(\hat{u})\leq \tilde{J}(\hat{u}) + \frac{\gamma}{n^{\frac{1}{2}}}M
 \leq \tilde{J}(u) + \frac{\gamma}{n^{\frac{1}{2}}}M
  \leq J(u) + \frac{2\gamma}{n^{\frac{1}{2}}}M.
 \end{equation}
 Similarly, if ${u}^\ast$ is a solution of $(\hat{P}_1)$, then for any $u\in\hat{\mathcal{U}}$ one has:
  \begin{equation}
 \tilde{J}({u}^\ast)\leq {J}({u}^\ast) + \frac{\gamma}{n^{\frac{1}{2}}}M
 \leq {J}(u) + \frac{\gamma}{n^{\frac{1}{2}}}M
  \leq \tilde{J}(u) + \frac{2\gamma}{n^{\frac{1}{2}}}M.
 \end{equation}
\end{proof}
\begin{theorem}\label{optimality th} 
Let Assumptions \ref{Assum_framework}, \ref{F_0_conv} and \ref{lip der assu} be satisfied.
Then any solution of Problem $(\hat{P}_{2})$
is an $\varepsilon$-optimal solution $($where $\varepsilon=cM^{2}/n)$ of Problem $(P_{1})$.
\end{theorem}
\begin{proof}
From Corollary \ref{corro_value_P_2} and Theorem \ref{independence of ustar}, one has for any $\hat{u}\in \hat{\mathcal{U}}$ solution of $(\hat{P}_2)$ that:
\begin{equation}\label{control_hat_u_J}
    \tilde{J}(\hat{u})\leq \inf_{u\in \mathcal{U}}\,J(u).
\end{equation}
Since $F_{0}$ is convex, differentiable, with a $c$-Lipschitz derivative, one can derive a.s.:
\begin{equation}
\begin{array}{l  }
 \label{ineq for F0 conv and diff}
      F_0(\frac{1}{n}\sum_{i=1}^n \hat{u}^i) - F_{0}(\frac{1}{n}\sum_{i=1}^n \E\,\hat{u}^i) 
      \\\leq \frac{1}{n} \langle \nabla F_{0}(\frac{1}{n}\sum_{i=1}^n \hat{u}^i) \,
      , \sum_{i=1}^n \textbf{u}^{i}\,\rangle_{\mathbb{U}} 
      
      \\  = \frac{1}{n}\langle \nabla F_{0}(\frac{1}{n}\sum_{i=1}^n \hat{u}^i)
       -\nabla F_{0}(\frac{1}{n}\sum_{i=1}^n \E\,\hat{u}^i) 
       \,, \sum_{i=1}^n \textbf{u}^{i}\,\rangle_{\mathbb{U}}
       + \frac{1}{n}\langle \nabla F_{0}(\frac{1}{n}\sum_{i=1}^n \E\,\hat{u}^i) 
      \,, \sum_{i=1}^n \textbf{u}^{i}\,\rangle_{\mathbb{U}}\\
       \leq \frac{c}{n^{2}}\, \Vert \sum_{i=1}^n \textbf{u}^{i}\, \Vert^{2}_{ \mathbb{U}}
     +\frac{1}{n} \langle \nabla F_{0}(\frac{1}{n}\sum_{i=1}^n \E \,\hat{u}^i ) 
      \,, \sum_{i=1}^n \textbf{u}^{i}\,\rangle_{\mathbb{U}},\end{array}
\end{equation}
where $\textbf{u}^i$ is defined from $\hat{u}$ as in \eqref{def u hat}.
From the definition of $\textbf{u}^i$, one obtains:
$$\mathbb{E}\left[\langle
  \nabla F_{0}(\frac{1}{n}\sum_{i=1}^n \E\,\hat{u}^i) 
  \,, \sum_{i=1}^n \textbf{u}^{i}
  \,\rangle_{ \mathbb{U}}\right]  
    = 0.$$
Since $\hat{u}\in \hat{\mathcal{U}}$, controls are mutually independent and bounded a.s. by $M$, one gets as in \eqref{ineq expectation 2}:
\begin{equation}
\label{bound var}
    \frac{c}{n^{2}}\,
    \mathbb{E}
    \Vert \sum_{i=1}^n \textbf{u}^{i}\, \Vert^{2}_{\mathbb{U}} 
   \leq \frac{c}{n}M^{2}.
  \end{equation}  
Taking the expectation of the first and last terms of \eqref{ineq for F0 conv and diff} and then incorporating \eqref{bound var} and the equality above, one obtains:
\begin{equation*}
    J(\hat{u})-\tilde{J}(\hat{u})\leq \frac{c}{n}M^{2}.
\end{equation*}
From previous inequality and \eqref{control_hat_u_J}, we get:
\begin{equation}\label{ineq th epsilon}
    J(\hat{u})\leq \inf_{u\in \mathcal{U}}\,J(u) +\frac{c}{n}M^{2}.
\end{equation}

\end{proof}
\begin{remark}
Observe that the value of the centralized Problem ($P_1$) on the l.h.s. of the inequality \eqref{up_and_lower_bound} below is upper bounded by the following decentralized problem on the r.h.s of this inequality i.e.
\begin{equation}\label{up_and_lower_bound}
\inf_{u\in \mathcal{U}}J(u)\leq\inf_{u\in \hat{\mathcal{U}}}J(u).
\end{equation}
 Ref. \cite{carpentier2020mixed} obtains an upper bound for the decentralized problem and a lower bound for the centralized problem.
The upper bound is provided by a resource decomposition approach (with deterministic quantities) while the lower bound is obtained
by a price decomposition approach with deterministic prices (see Equation (28) of \cite{carpentier2020mixed}).
Theorem \ref{optimality th} provides
an upper bound for Problem ($P_1$) with an a priori quantification of the deviation from the optimal value which vanishes when the number of agents grows to infinity. Moreover, in Section \ref{proposed algo} we provide an original algorithm that allows to approach the solution of the decentralized problem.
\end{remark}

\begin{remark}

Let $\hat{u}$ and $u^{\ast}$ be respectively the optimal solutions of problems $(\hat{P}_{2})$ and $(P_{1})$.
From Jensen's inequality and by definition of $\hat{u}$ we have:
\begin{equation*}
-J(u^{\ast}) \leq -\tilde{J}(u^{\ast})\leq -\tilde{J}(\hat{u}).
\end{equation*}
Adding $J(\hat{u})$, one has:
\begin{equation}
\label{ineq: interest}
    0  \leq J(\hat{u})-{J}(u^{\ast}) \leq 
   J(\hat{u}) -\tilde{J}(u^{\ast}) \leq 
    J(\hat{u})-\tilde{J}(\hat{u}).
\end{equation}
Inequality \eqref{ineq: interest} allows to compute an upper bound of the "optimality" error $J(\hat{u})-J(u^{\ast})$, by evaluating $J(\hat{u})-\tilde{J}(\hat{u})$.
\end{remark}

\section{Dualization and Decentralization of Problem $(P_2)$}
The \textit{Lagrangian} function associated to the constrained optimization Problem $(\hat{P}_{2}')$, defined in \eqref{eq:P2v}, is: $L:\hat{\mathcal{U}}\times  \mathbb{U} \times  \mathbb{U} \xrightarrow{}\bar{\mathbb{R}}$ defined by: 
\begin{equation}\label{def_lag}
 L(u,v,\lambda):= \bar{J}(u,v) + \langle \lambda,\frac{1}{n}\sum_{i=1}^{n}\mathbb{E}\,u^{i} -v \rangle_{\mathbb{U}}.
\end{equation}
The dual Problem $(D)$ associated with $(\hat{P}_2')$ is:
\begin{equation}
\label{dual problem formulation}
\begin{array}{ c c}
   (D)\quad  
   \underset{\lambda\in \mathbb{U}}{\max}\, \mathcal{W}(\lambda), 
   & \mbox{ where }\, \mathcal{W}(\lambda):=
   \underset{u\in\hat{\mathcal{U}} ,v\in \mathbb{U}}{\min} 
   L(u,v,\lambda).
\end{array}
\end{equation}
For any $\lambda\in \mathbb{U}$, it holds:
\begin{equation}
\label{expW}
\mathcal{W}(\lambda) = - F_0^\ast(\lambda) + \frac{1}{n}\sum_{i=1}^{n}\underset{u ^i\in\hat{\mathcal{U}}_i}{\min}\,\E [ G_i(u^i,\omega^i)]+\langle \lambda,\E \,u^i\rangle_{\mathbb{U}},
\end{equation}
where, for any real valued function $F$ defined on $\mathbb{U}$, $F^{\ast}$ stands for its Fenchel conjugate, defined for $x \in \mathbb{U}$ by $F^\ast(x):=\sup_{y\in \mathbb{U}}\, \langle x,y \rangle_{\mathbb{U}}-F(y)$.
The problem is said to be qualified if it is still feasible after a small perturbation of the constraint, in the following sense:
\begin{equation}
\label{cons qual}
\text{There exists $\varepsilon>0$ such that }
\mathcal{B}_{\mathbb{U}}(0,\varepsilon)
\subset g(\hat{\mathcal{U}},\mathbb{U}),
\end{equation}
where $\mathcal{B}_{\mathbb{U}}(0,\varepsilon)$ is the open ball of radius $\varepsilon$ in $\mathbb{U}$, $g$ has been defined in \eqref{def_g} {\color{black} and $g(\hat{\mathcal{U}},\mathbb{U})$ is the image by $g$ of $\hat{\mathcal{U}}\times \mathbb{U}$.}
\begin{lemma}
\label{constraint qualitfication}
If Assumption \ref{Assum_framework} holds, then Problem $(\hat{P}_2')$ is qualified. 
If Assumption \ref{F_0_conv} is also satisfied, then problems $(\hat{P}_2')$ and $(D)$ have the same value, the set of dual solutions $S$ is nonempty and bounded and any primal solution $\hat{u}$
satisfies both $W(\hat{\lambda}) = \tilde{J}(\hat{u})$ and $ (\hat{u},\hat{v})\in \underset{u\in\hat{\mathcal{U}},v\in \mathbb{U}}{\argmin}\,L(\hat{\lambda},u,v)$, with $\hat{\lambda}\in S$.
\end{lemma}

\begin{proof}
\if{}Choose $\varepsilon:=M$. Then: 
$\mathcal{B}_{\mathbb{U}}(0,\varepsilon)
\subset\overline{\mathcal{B}_{\mathbb{U}}(0,2M)}
=g(0,\mathbb{U}) \subset g(\mathcal{U},\mathbb{U})$, where $g(0,\mathbb{U})$ is the image by $g$ of $\{0\}\times\mathbb{U}$. Since $g(0,\mathbb{U}) = \mathbb{U}$}, the  qualification of $(\hat{P}_2')$ follows.
 \fi
By Assumption \ref{Assum_framework}.\ref{P1:feasible}, there exists $\check{u}$ feasible for Problem $(P_1)$. Then using the definition of $g$ in \eqref{def_g} 
\begin{equation}
\mathcal{B}_{\mathbb{U}}(0,\varepsilon) \subset \mathbb{U}
=  g(\check{u},\mathbb{U}) \subset g(\hat{\mathcal{U}},\mathbb{U}).
\end{equation}
The qualification of $(\hat{P}_2')$ follows.
The conclusion follows by \cite[Theorem 2.165]{bonnans2013perturbation}.
\end{proof}

Since the set of admissible controls  $\hat{\mathcal{U}}=\hat{\mathcal{U}}_{1}\times\ldots\times \hat{\mathcal{U}}_{n}$ is a Cartesian product, if $G_i$ is strictly convex with respect to its first variable,
then each component $\hat{u}^{ i}$ of the solution $\hat{u}$ of Problem $(\hat{P}_{2})$, can be uniquely determined by solving the following subproblem:
$$
\hat{u}^{i}=\underset{u^{i}\in\hat{\mathcal{U}}_i}{\argmin}\,
\left\{\mathbb{E}\left[G_{i}(u^{i},\omega^i)
+\langle\hat{\lambda},u^{i}\rangle_{ \mathbb{U}}\right]\right\},$$
where $\hat{\lambda}\in S$.
\begin{remark}
By using the same argument as in Theorem \ref{independence of ustar}, one can prove, for any $\lambda\in \mathbb{U}$:
\begin{equation}
 \underset{u^{i}\in\hat{\mathcal{U}_i}}{\min}\,
\left\{\mathbb{E}\left[G_{i}(u^{i},\omega^i)
+\langle\lambda,u^{i}\rangle_{ \mathbb{U}}\right]\right\}  
 =\underset{u^{i}\in\mathcal{U}_i}{\min}\,
\left\{\mathbb{E}\left[G_{i}(u^{i}(\cdot,\omega^{-i}),\omega^i)
+\langle\lambda,u^{i}\rangle_{ \mathbb{U}}\right]\right\}. 
\end{equation}
\end{remark}

\section{\textit{Stochastic Uzawa} and \textit{Sampled Stochastic Uzawa} algorithms}
\label{proposed algo}
This section aims at proposing an algorithm to find a solution of the dual problem \eqref{dual problem formulation}. 

\subsection{Preliminary results}

Though the below result is well-known and can be found in \cite{hiriart2004fundamentals} for functions defined on finite vector spaces, we adapt the proof to the Hilbert space setting for the sake of completeness.
\begin{lemma}
\label{strong conc fenchel F_0} 
If Assumption \ref{Assum_framework} and \ref{F_0_conv} are satisfied, then Assumption \ref{lip der assu} holds iff $F_{0}^{\ast}$ is
strongly convex.
\end{lemma}

\begin{proof}
(i)
Let Assumption \ref{lip der assu} holds.
Since $F_{0}$ is proper, convex and l.s.c., $F^{\ast}_{0}$ is l.s.c. proper. From the Lipschitz property of the gradient of $F_{0}$, it holds that $\mbox{dom}(F_{0})=\mathbb{U}$.

Let $s,\tilde{s}\in \mbox{dom}(F_{0}^{\ast})$ such that there exist $\lambda_{s}\in\partial F_{0}^{\ast}(s)$ and $\lambda_{\tilde{s}}\in\partial F_{0}^{\ast}(\tilde{s})$. From the differentiability, l.s.c. and convexity of $F_0$, it follows that: $s=\nabla F_{0}(\lambda_{s})$ and $\tilde{s}=\nabla F_{0}(\lambda_{\tilde{s}})$.
By Assumption \ref{lip der assu} and the extended Baillon-Haddad theorem  \cite[Theorem 3.1] {perez2021enhanced}, $\nabla F_{0}$ is cocoercive. In other words:
\begin{equation}
\begin{array}{ll}
 \langle s-\tilde{s},
\lambda_{s} -\lambda_{\tilde{s}} \rangle_{\mathbb{U}}
& =\langle \nabla F_{0}(\lambda_{s})-\nabla F_{0}(\lambda_{\tilde{s}}),
\lambda_{s} -\lambda_{\tilde{s}} \rangle_{\mathbb{U}}  \vspace{0.1cm}
\\
 & \geq \frac{1}{c}\Vert\nabla F_{0}(\lambda_{s})
 -\nabla F_{0}(\lambda_{\tilde{s}})\Vert_{\mathbb{U}}^{2} \vspace{0.1cm}
\\
&  =\frac{1}{c}\Vert s - \tilde{s} \Vert_{\mathbb{U}}^{2},
\end{array}
\end{equation}
where $c$ is the Lipschitz constant of $\nabla F_{0}$ defined in Assumption \ref{lip der assu}.
Therefore $\partial F_{0}^{\ast}$ is strongly monotone, which implies the strong convexity of $F_{0}^{\ast}$.
\\ (ii)
Conversely, assume that $F_{0}^{\ast}$ is proper and strongly convex.
Then there exist $\alpha,\beta>0$ and $\gamma\in \mathbb{U}$ such that for any $s\in \mbox{dom}(F^{\ast}_{0})$: $
 F_{0}^{\ast} (s) \geq \alpha\Vert s\Vert_{\mathbb{U}}^2  +\langle \gamma,\alpha\rangle_{\mathbb{U}}- \beta$,
 and $F_{0}$ being convex, l.s.c. and proper, for any $\lambda\in \mathbb{U}$ it holds: 
 \begin{equation}
 F_{0}(\lambda)\leq 
 \underset{s\in \mathbb{U}}{\sup}
 \langle s,\lambda-\gamma\rangle_{\mathbb{U}}
 - \alpha\Vert s\Vert^2_{\mathbb{U}}+\beta
 =
 \| \lambda-\gamma\|^2 / (4\alpha) + \beta.
 \end{equation}
Thus, $F_{0}$ is proper and uniformly upper bounded over bounded sets and therefore  is locally Lipschitz.
In addition, from the strong convexity of  $F_{0}^{\ast}$ and the convexity of $F_{0}$, for any $\lambda\in \mathbb{U}$, $\partial F_{0}(\lambda)$ is a singleton. Thus $F_0$ is everywhere Gâteaux differentiable. 

Let $\lambda,\mu\in \mathbb{U}$.
Since $F_{0}^{\ast}$ is strongly convex, the functions $F_{0}^{\ast}(s)-\langle \lambda,s\rangle_{\mathbb{U}}$ 
(resp. $F_{0}^{\ast}(s)-\langle \mu,s\rangle_{\mathbb{U}}$) has a unique minimum point $s_{\lambda}$ 
(resp. $s_{\mu}$), characterized by: $
\lambda\in\partial F_{0}^{\ast}(s_{\lambda})
\quad\mbox{and}\quad
\mu\in\partial F_{0}^{\ast}(s_{\mu})$. 
From the strong convexity of $F_{0}^{\ast}$, the strong monotonicity of $\partial F_{0}^{\ast}$ holds: $
\langle \mu-\lambda,
s_{\mu}-s_{\lambda}\rangle_{\mathbb{U}}
\geq 
\frac{1}{c}\,
\Vert s_{\mu}-s_{\lambda}\Vert^{2}_{\mathbb{U}}$, 
where $c>0$ is a constant related to the strong convexity of $F_{0}^{\ast}$. Using that $s_{\lambda}=\nabla F_{0}(\lambda)$ and $s_{\mu}=\nabla F_{0}(\mu)$, it holds:
\begin{equation}
\langle\, \mu - \lambda, 
\nabla F_{0}(\mu)- \nabla F_{0}(\lambda)\, \rangle_{L^{2}(0,T)}
\geq 
\frac{1}{c}\,
\Vert \nabla F_{0}(\mu)
- \nabla F_{0}(\lambda) \Vert_{L^{2}(0,T)}^{2},
\end{equation}
meaning that $\nabla F_{0}$ is cocoercive.
Applying the Cauchy–Schwarz inequality to the left hand side of the previous inequality, the Lipschitz property of $\nabla F_0$ follows.
\end{proof}

\begin{lemma}
\label{strong conc dual}
If Assumptions \ref{Assum_framework}, \ref{F_0_conv} and \ref{lip der assu} hold, then $\mathcal{W}$ is strongly concave.
\end{lemma}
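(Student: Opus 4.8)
The plan is to read off the structure of $\mathcal{W}$ directly from the closed-form expression \eqref{expW} and to argue that it is a sum of a strongly concave term and a concave term. Recall that
\begin{equation*}
\mathcal{W}(\lambda) = - F_0^\ast(\lambda) + \frac{1}{n}\sum_{i=1}^{n}\,\underset{u^i\in\hat{\mathcal{U}}_i}{\Min}\,\big\{\E ( G_i(u^i,\omega^i))+\langle \lambda,\E (u^i)\rangle_{\mathbb{U}}\big\}.
\end{equation*}
First I would treat the sum over $i$: for each fixed $u^i\in\hat{\mathcal{U}}_i$ the map $\lambda\mapsto \E(G_i(u^i,\omega^i))+\langle\lambda,\E(u^i)\rangle_{\mathbb{U}}$ is affine in $\lambda$, hence concave; the pointwise infimum of a family of concave (affine) functions is concave, and it is not identically $-\infty$ because, by Assumption \ref{Assum_framework}.\ref{convexity:Gi}, $G_i\ge 0$ and $\hat{\mathcal{U}}_i$ is bounded, so each inner minimum is finite. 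Therefore $\lambda\mapsto\frac{1}{n}\sum_{i=1}^n\min_{u^i\in\hat{\mathcal{U}}_i}\{\cdots\}$ is a finite concave function on $\mathbb{U}$.

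Next I would handle the term $-F_0^\ast(\lambda)$. By Lemma \ref{strong conc fenchel F_0}, Assumption \ref{lip der assu} implies that $F_0^\ast$ is proper and strongly convex; equivalently there is $m>0$ with $F_0^\ast(\cdot)-\tfrac{m}{2}\|\cdot\|_{\mathbb{U}}^2$ convex, so $-F_0^\ast$ is proper and strongly concave with the same modulus. Finally I would combine the two pieces: the sum of a strongly concave function (modulus $m$) and a concave function is strongly concave (modulus $m$), since subtracting $\tfrac{m}{2}\|\cdot\|^2$ from $\mathcal{W}$ leaves a sum of two concave functions. This gives that $\mathcal{W}$ is strongly concave, concluding the proof.

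I do not expect a serious obstacle here; the only point requiring a little care is making sure the inner-infimum term is a genuine (finite, proper) concave function rather than being $-\infty$ somewhere, which is where the nonnegativity of $G_i$ and the boundedness of $\hat{\mathcal{U}}_i$ from Assumption \ref{Assum_framework} are used, together with feasibility of $(P_1)$ (Assumption \ref{Assum_framework}.\ref{P1:feasible}) so that each $\hat{\mathcal{U}}_i$ is nonempty. Everything else is the elementary algebra of adding strong concavity and concavity, plus a direct appeal to Lemma \ref{strong conc fenchel F_0}.
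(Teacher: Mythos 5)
Your proof is correct and takes essentially the same route as the paper's: decompose $\mathcal{W}$ via \eqref{expW} into $-F_0^\ast$, which is strongly concave by Lemma \ref{strong conc fenchel F_0}, plus an infimum of affine functions of $\lambda$, which is concave, and then add. Your extra care in checking that the inner infima are finite (using $G_i\ge 0$ and the boundedness and nonemptiness of $\hat{\mathcal{U}}_i$) is a detail the paper's proof leaves implicit, but it does not change the argument.
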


\begin{proof}
For any $\lambda\in \mathbb{U}$, the expression of $\mathcal{W}(\lambda)$ is given by \eqref{expW},
where for any $i\in \{1,\ldots,n\}$,  $\lambda \mapsto \underset{u ^i\in\hat{\mathcal{U}}_i}{\inf}\,\E\,G_i(u^i,\omega^i)+\langle \lambda,\E\,u^i \rangle_{\mathbb{U}}$ is concave and, from Lemma \ref{strong conc fenchel F_0}, $-F_{0}^{\ast}$ is strongly concave. Since the sum of a concave function and of a strongly concave
function is strongly concave, the result follows.
\end{proof}

\subsection{Analysis of the algorithms}\label{analysis_algo}
Assumptions \ref{Assum_framework}, \ref{F_0_conv} and \ref{assumption_conv_algo} are supposed to hold throughout Section \ref{analysis_algo}.
For all $i\in\{1,\ldots n\}$, and $\lambda\in  \mathbb{U}$, we define the optimal control
$u^{i}(\lambda)$: 
\begin{equation}
\label{def u_lambda}
u^{i}(\lambda):=\underset{u^{i}\in\hat{\mathcal{U}}_{i}}
{\argmin}\,\left\{\mathbb{E}\left[G_{i}(u^{i},\omega^i)
+\langle\lambda,u^{i}\rangle_{\mathbb{U}}\right]\right\},
\end{equation} 
which is well defined since $u^{i}\to \mathbb{E}\,G_{i}(u^{i},\omega^i)$ is strictly convex.

For any $\lambda\in \mathbb{U}$, the subset $V(\lambda)$ is defined by: 

\begin{equation}
\label{def v_lambda}
V(\lambda) :=\underset{v\in  \mathbb{U}}{\argmin}\{F_{0}(v)
-\langle \lambda,v \rangle_{\mathbb{U}}\}.
\end{equation}
Since $F_{0}$ is convex and 
has at least quadratic growth,
$V(\lambda)$ is a non empty subset of $\mathbb{U}$ and is reduced to a singleton if $F_{0}$ is strictly convex. For any $\lambda\in \mathbb{U}$, we denote by $v(\lambda)$ an element of $V(\lambda)$, and for any $v(\lambda)\in V(\lambda)$, one has $v( \lambda)\in \partial F_0^\ast(\lambda)$.

Uzawa's algorithm seems particularly fitting for this problem. However, at each dual iteration $k$ and any $i\in\{1,\ldots , n\}$, one would have to compute the quantities $\mathbb{E}[u^{i}(\lambda^{k})]$ for the update of $\lambda^{k+1}$, which is hard in practice. Therefore two algorithms are proposed where, at each iteration $k$, $\lambda^{k+1}$ is updated thanks to a realization of $u^{i}(\lambda^{k})$.

We introduce the function $f\,:\,\mathbb{U}\to \mathbb{U}$ defined by:
\begin{equation}
\label{def f}
f(\lambda):=g(u(\lambda),v(\lambda))=\frac{1}{n}\sum_{i=1}^{n}\mathbb{E}\,u^{i}(\lambda)-v(\lambda).
\end{equation}
By Assumption \ref{assumption_conv_algo}.\ref{quad_growth}, $F_0$ has at least quadratic growth, whence $F_0^\ast$ has at most quadratic growth. Indeed, using the definition of the Fenchel's conjugate, the fact that $F_0$ has at least quadratic growth, and the Cauchy-Schwarz inequality, there exist $\bar{C}_1,\bar{C}_2>0$ such that for any $\lambda\in \mathbb{U}$:
\begin{equation*}
    F_0^\ast(\lambda)\leq \sup_{\mu \in \mathbb{U}}\,\Vert \mu \Vert_\mathbb{U}\Vert \lambda \Vert_\mathbb{U} - \bar{C}_1\Vert \mu \Vert_\mathbb{U}^2  + \bar{C}_2 = \frac{\Vert \lambda \Vert_\mathbb{U}^2}{2\bar{C}_1} +\bar{C}_2.
\end{equation*}
Then using  Lemma \ref{quad_growth_lemma} in Appendix \ref{append} and that $v(\lambda)\in \partial F_0^\ast(\lambda)$, there exists $C>0$ such that for any $\lambda\in \mathbb{U}$:
\begin{equation}\label{ineq_norm_v}
    \Vert v(\lambda)\Vert_\mathbb{U}\leq C(\Vert \lambda \Vert_\mathbb{U}+1).
\end{equation}
Using the definition of $\mathcal{U}$, one has $\textstyle \frac{1}{n}\sum_{i=1}^n\Vert \E\,u^i(\lambda) \Vert_\mathbb{U}\leq M$ for any $\lambda\in \mathbb{U}$. Therefore, from the definition of $f$ in \eqref{def f}, there exist $M_1,M_2>0$ such that for any  $\lambda\in \mathbb{U}$ one has: 
\begin{equation}
\label{f bounded}
\Vert f(\lambda)\Vert_{\mathbb{U}}^{2}\leq M_{1}+M_2\Vert \lambda \Vert_\mathbb{U}^2.
\end{equation} 
For any $\lambda \in  \mathbb{U}$, we denote by $\partial (-\mathcal{W}(\lambda))$ the subgradient of $-\mathcal{W}$ at $\lambda$. Therefore, for any $\lambda\in \mathbb{U}$:
\begin{equation}
\partial (-\mathcal{W}(\lambda))\ni -f(\lambda).
\label{theoreme enveloppe}
\end{equation}
The iterative algorithm, proposed as an approximation scheme for $\lambda^{*}\in\underset{\lambda}{\argmax}\, \mathcal{W}(\lambda)$, is summarized in the \textit{Stochastic Uzawa} Algorithm \ref{proposed algo desc}.

\begin{algorithm}[ht]
\caption{Stochastic Uzawa}
\label{proposed algo desc}
\begin{algorithmic}[1]
    \State Initialization $\lambda^{0}\in\mathbb{U}$, set $\{\rho_{k}\}_{k\in \mathbb{N}^\ast}$ satisfying Assumption \ref{assumption 1}.
    
    \State $k \leftarrow 0$.
    \For{$k=0,1,\ldots$}
    
        \State $v^{k} \leftarrow v(\lambda^{k})$ where $v(\lambda^{k})\in V(\lambda^{k})$ , this set being defined in \eqref{def v_lambda}.

        \State \label{update local}$u^{i,k} \leftarrow u^{i}(\lambda^{k})$ where $u^{i}(\lambda^{k})$ is defined in \eqref{def u_lambda} for any $i\in\{1,\ldots,n\}$.
        
        \State Generate $n$ independent noises  $(\omega^{1,k+1},\ldots,\omega^{n,k+1})$,  independent also of $\{\omega^{i,p}: 1\leq i\leq n, p\leq k\}$.
        
         \State \label{simulation state algo}Compute the associated control realization
         $(u^{1,k}(\omega^{1,k+1}),\ldots,u^{n,k}(\omega^{n,k+1}))$.
        
        \State \label{update Y} $ Y^{k+1}\leftarrow \frac{1}{n}\sum_{i=1}^{n}u^{i,k}(\omega^{i,k+1})-v^k$.

        \State $\lambda^{k+1}\leftarrow \lambda^{k}+\rho_{k}\,Y^{k+1}.$  \label{alt update price algo}

    \EndFor
\end{algorithmic}
\end{algorithm}

At any dual iteration $k$ of Algorithm \ref{proposed algo desc}, $Y^{k+1}$ is an estimator of  $\mathbb{E}\big[\frac{1}{n}\sum_{i=1}^{n}u^{i}(\lambda^{k})(\omega^{i,k+1})-v(\lambda^{k})\big]$.
An alternative approach, proposed in the \textit{Sampled Stochastic Uzawa} Algorithm \ref{alternative algo desc}, consists in performing less simulations at each iteration, by taking $m<n$, at the risk of performing more dual iterations, to estimate the quantity $\mathbb{E}\big[\frac{1}{n}\sum_{i=1}^{n}u^{i}(\lambda^{k})(\omega^{i,k+1})-v(\lambda^{k})\big]$.
\begin{algorithm}[ht]
\caption{Sampled Stochastic Uzawa}
\label{alternative algo desc}
\begin{algorithmic}[1]
    \State Initialization of $m$ a positive integer and $\check{\lambda}^{0}\in \mathbb{U}$, set $\{\rho_{k}\}_{k\in \mathbb{N}^\ast}$ satisfying Assumption \ref{assumption 1}.
    \State $k \leftarrow 0$.
    \For{$k=0,1,\ldots$}
    
        \State \label{opt v comp} $v^{k} \leftarrow v(\check{\lambda}^{k})$ where $v(\check{\lambda}^{k})\in V(\check{\lambda}^{k})$, this set being defined in \eqref{def v_lambda}.

        \State \label{I:def} Generate $m$ i.i.d. discrete random variables $I^k_{1},\ldots,I^k_{m}$ uniformly in $\{1,\ldots, n\}$.

        \State \label{opt u comp}
        $u^{I^k_{j},k} \leftarrow u^{I^k_{j}}(\check{\lambda}^{k})$ where $u^{I^k_{j}}(\check{\lambda}^{k})$ is defined in \eqref{def u_lambda} for any $j\in\{1,\ldots,m\}$.
        
         \State \label{W:def}Generate $m$ independent noises $(\omega^{1,k+1},\ldots,\omega^{m,k+1})$, independent also of $\{\omega^{i,p}: 1\leq i\leq m, p\leq k\}$.
         \State \label{alt simulation state algo}Compute the associated control realization 
         $(u^{I^k_{1},k}(\omega^{1,k+1}),\ldots,u^{I^k_{m},k}(\omega^{m,k+1}))$.
         
        \State \label{alt update Y} 
        $\check{Y}^{k+1}\leftarrow \frac{1}{m}\sum_{j=1}^{m}u^{I^k_{j},k}(\omega^{I^k_{j},k+1})-v^{k}$
        
        \State $\check{\lambda}^{k+1}\leftarrow \check{\lambda}^{k}+\rho_{k}\check{Y}^{k+1}.$  \label{update price algo}

    \EndFor
\end{algorithmic}
\end{algorithm}

 The complexity of the \textit{Sampled Stochastic Uzawa} Algorithm \ref{alternative algo desc} is proportional to $m\times K$, where $K$ is the total number of dual iterations and $m$ the number of simulations performed at each iteration. The error $\E\,\Vert\lambda^{k+1}-\lambda^*\Vert_{\mathbb{U}}^2$ for $\lambda^*\in S$ (we recall that $S$ is defined by $S:=\underset{\lambda\in   \mathbb{U}}{\argmax}\,\mathcal{W}(\lambda)$ and from Lemma \ref{constraint qualitfication} $S$ is non empty) is the sum of the square of the bias (which only depends on $K$ and not on $m$) and the variance (which both depends on $K$ and $m$). Therefore, this algorithm 
enables a bias variance trade-off for a given complexity. Similarly, for a given error, it enables to optimize the complexity of the algorithm.

The following result establishes the convergence of the \textit{Stochastic Uzawa} Algorithm \ref{proposed algo  desc}:

\begin{lemma}
\label{theorem convergence}
Let Assumptions \ref{Assum_framework}, \ref{F_0_conv}, \ref{assumption_conv_algo} and \ref{assumption 1} hold and let $\{\lambda_k\}_k$ be a sequence of multipliers generated by Algorithm \ref{proposed algo  desc}. Then:
\begin{enumerate}
[label={\normalfont (\roman*)}]
\item \label{conv norm lambda} $\{\Vert \lambda^{k}-\lambda \Vert^{2}_{  \mathbb{U}}\}$ 
converges a.s., for all $\lambda\in S$.
\item \label{conv W}$\mathcal{W}(\lambda^{k})\xrightarrow[k \to \infty]{}
\underset{\lambda\in   \mathbb{U}}
{\max} \,\mathcal{W}(\lambda)$  a.s.
\item \label{weak conv a.s lambda}$\{\lambda^{k}\}$ weakly converges to some $\bar{\lambda}\in S$ in $\mathbb{U}$ a.s.
\item \label{conv a.s lambda} If Assumption \ref{lip der assu} holds, then a.s. $\{\lambda^{k}\}$ converges to $\bar{\lambda}$ in $\mathbb{U}$, with $S:=\{\bar{\lambda}\}$.
\end{enumerate}
\end{lemma}

The proof follows from 
\cite[Theorem 3.6]{geiersbach2019projected}. The cited reference (changing minimization in maximization) is interested in the maximization of a function $\mathcal{W}$ of the specific form $ \mathcal{W}(\lambda)=\mathbb{E}\,\textbf{W}(\lambda,\omega)$, where $\textbf{W}(\cdot,\omega)$ is concave a.s. in $\omega$. However, in our setting we cannot in general exhibit such a representation for the dual function $\mathcal{W}$, defined in \eqref{dual problem formulation}. Using the definition of $u(\lambda)$ in \eqref{def u_lambda} and $v(\lambda)$ in \eqref{def v_lambda}, we have  $\mathcal{W}(\lambda)=\E\,\textbf{W}(\lambda,\omega)$, where
\begin{equation*}
\lambda \mapsto \textbf{W}(\lambda,\omega):=F_0(v(\lambda))+\frac{1}{n}\sum_{i=1}^nG_i(u^i(\lambda),\omega^i)+\langle\lambda,\frac{1}{n}\sum_{i=1}^{n}u^{i}(\lambda)-v(\lambda)\rangle_\mathbb{U}
\end{equation*}
Note tat $\textbf{W}(\cdot,\omega)$ is not a concave function of $\lambda$ for a.a. $\omega\in \Omega$. Although our setting does not enter in the framework considered in \cite{geiersbach2019projected}, the proof of Lemma \ref{theorem convergence} follows from an obvious adaptation of the one in \cite[Theorem 3.6]{geiersbach2019projected}. It is enough to provide the first steps of the proof.

\begin{proof}[Proof of Lemma \ref{theorem convergence}]
First consider point \ref{conv norm lambda}.
Let $\lambda\in S$.
For any $k$, $\mathcal{G}_{k+1}$ is the filtration defined by: 
         \begin{equation}
         \label{def:G}
         \mathcal{G}_{k+1} := \sigma\left(\{\omega^{i,p}\}: 1\leq i \leq n, \, p\leq k+1\}\right).
         \end{equation}
Using the definition of $Y^{k+1}\in \mathbb{U}$ line \ref{update Y} in the \textit{Stochastic Uzawa} Algorithm \ref{proposed algo desc}, we have:  
\begin{equation}
\begin{array}{ll}
\Vert \lambda^{k+1}-\lambda\Vert_{\mathbb{U}}^{2}
& =\Vert \lambda^{k} + \rho_{k}Y^{k+1} -\lambda \Vert_{\mathbb{U}}^{2} \\
 & = \Vert \lambda^{k}-\lambda\Vert_{\mathbb{U}}^{2} 
 +2\rho_{k}\langle \lambda^{k}-\lambda, Y^{k+1}\rangle_{  \mathbb{U}} +(\rho_{k})^{2}\Vert Y^{k+1} \Vert_{\mathbb{U}}^{2}.
\end{array}
\end{equation}
Since $Y^{k+1}$ is independent from $\mathcal{G}_{k}$, it follows that:
\begin{equation}
\mathbb{E}[\Vert Y^{k+1}\Vert_{  \mathbb{U}}^{2}|\mathcal{G}_{k}]
=\mathbb{E}\,
\Vert\frac{1}{n}\sum_{i=1}^{n}u^{i}(\lambda^{k})(\omega^{i,k+1})
-v(\lambda^{k})\Vert_{\mathbb{U}}^{2}.
\end{equation}
Using previous equality and the inequality \eqref{f bounded}, one can easily show that there exists $M_3,M_4>0$ such that, for any $k\in \mathbb{N}$, one has:
\begin{equation}
\label{ineq norm Y conditioned}
\mathbb{E}[\Vert Y^{k+1}\Vert_{ \mathbb{U}}^{2}|\mathcal{G}_{k}]
\leq M_1+M_2\Vert \lambda^k\Vert_\mathbb{U}^2\leq M_3+M_4\Vert \lambda^{k}-\lambda\Vert_{\mathbb{U}}^{2}
\end{equation}
Since $\lambda^{k}$ is $\mathcal{G}_{k}$-measurable and that $\mathbb{E}[Y^{k+1}|\mathcal{G}_{k}]=f(\lambda^{k})$, we have that:
\begin{equation}
\begin{array}{ll}
\mathbb{E}[\Vert \lambda^{k+1}-\lambda\Vert_{\mathbb{U}}^{2}|\mathcal{G}_{k}]  
 &= \Vert \lambda^{k}-\lambda\Vert_{\mathbb{U}}^{2}
 +2\rho_{k}\mathbb{E}[\langle \lambda^{k}-\lambda,Y^{k+1}\rangle_\mathbb{U}|\mathcal{G}_{k})]
 +(\rho_{k})^{2}\mathbb{E}[\Vert Y^{k+1}\Vert_{  \mathbb{U}}^{2}|\mathcal{G}_{k}]\\
&\leq \Vert \lambda^{k}-\lambda\Vert_{\mathbb{U}}^{2} 
+2\rho_{k}\langle \lambda^{k}-\lambda,f(\lambda^{k})\rangle_\mathbb{U}
+(\rho_{k})^{2}( M_3+M_4\Vert \lambda^{k}-\lambda\Vert_{\mathbb{U}}^{2})\\
&\leq \Vert \lambda^{k}-\lambda\Vert_{\mathbb{U}}^{2}(1+M_4\rho_k^2)
+ (\rho_{k})^{2}M_{3}-2\rho_{k}(\mathcal{W}(\lambda)-\mathcal{W}(\lambda^{k})).
\end{array}
\label{serie of ineq}
\end{equation}
In the last inequality, we used the concavity of $\mathcal{W}$ and \eqref{theoreme enveloppe}. The rest of the proof follows
\cite[Theorem 3.6]{geiersbach2019projected}.
\end{proof}

Recalling the definition of $\bar{J}(u,v)$ in \eqref{eq:P2v} and of $\bar{\lambda}$ in Lemma \ref{theorem convergence}.\ref{weak conv a.s lambda}, we define $\bar{u}$:
\begin{equation}
\label{def u bar}
\bar{u} :=\underset{u\in\hat{\mathcal{U}}}
{\argmin}\,\left\{\mathbb{E}\left[\sum _{i=1}^{n}G_{i}(u^{i},\omega^i)
+\langle\bar{\lambda},u^{i}\rangle_{\mathbb{U}}\right]\right\}.
\end{equation}
Under Assumption \ref{assumption_conv_algo},
$G_i$ is strictly convex w.r.t. the first variable, and then $\bar{u}$ is well defined.
If $F_{0}$ is strictly convex, then $V(\bar{\lambda})$ is a singleton and we can write:
\begin{equation}
\label{def v bar}
\bar{v} :=\underset{v\in\mathbb{U}}
{\argmin}\,\left\{F_{0}(v)
+\langle\bar{\lambda},v\rangle_{\mathbb{U}}\right\}.
\end{equation}

\begin{remark}\label{saddel_point_lag}
If $F_0$ is convex, by Lemma \ref{constraint qualitfication}, there is no duality gap associated to the Lagrangian $L$ defined in \eqref{def_lag}. 
Further, if $F_0$ is strictly convex, then $(\bar{u},\bar{v},\bar{\lambda})$ is the unique saddle point associated to the Lagrangian $L$. Indeed, by  Assumption \ref{lip der assu}, $\bar{\lambda}$ is the unique solution of the dual problem $(D)$, by Assumption \ref{assumption_conv_algo}.\ref{strict_gi}, $\bar{u}$ is unique and by strict convexity of $F_0$, $\bar{v}$
is also the unique minimizer in the right hand side of \eqref{def v bar}.

\end{remark} 
\begin{theorem}
\label{result of liminf conv}
Let the Assumptions  \ref{Assum_framework}, \ref{F_0_conv}, \ref{lip der assu}, \ref{assumption_conv_algo} and \ref{assumption 1} hold, then we have:
\begin{enumerate}
[label={\normalfont (\roman*)}]
\item
\label{weak conv u}
 $\{u(\lambda^{k})\}$ weakly converges a.s. to  $\bar{u}$.
 \end{enumerate}
Furthermore, if $F_{0}$ is strictly convex,
then from Remark \ref{saddel_point_lag}, $\bar{u}$ is the unique minimizer of $\tilde{J}$ in $\hat{\mathcal{U}}$ and:
\begin{enumerate}
\item[{\normalfont (ii)}]
\label{conv J_tilde}
$\tilde{J}(u(\lambda^{k}))\xrightarrow[k\to \infty]{}
\tilde{J}(\bar{u})\, \mbox{ a.s.}$ 
\item[{\normalfont (iii)}]
\label{conv J}
$\underset{k\to\infty}{\lim \sup}\,J(u(\lambda^{k}))\leq \inf_{u\in\mathcal{U}} J(u)+2\,\varepsilon$ a.s. where $\varepsilon = cM^{2}/n$.
\end{enumerate}
\end{theorem}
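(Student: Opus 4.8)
The plan is to first establish the weak convergence (i) by a variational-limit argument, then identify the unique saddle point, and finally derive (ii)--(iii) from the convergence of the dual values already provided by Lemma \ref{theorem convergence}. For Part (i) I would fix $i$ and work on the almost sure event where $\lambda^k\to\bar\lambda$ strongly, given by Lemma \ref{theorem convergence}\ref{conv a.s lambda} (available since Assumption \ref{lip der assu} holds). Since $\hat{\mathcal{U}}_i$ is bounded (Assumption \ref{Assum_framework}(i)), closed and convex in $L^2(\Omega^i,\mathbb{U})$, it is weakly sequentially compact, so $\{u^i(\lambda^k)\}_k$ has weak cluster points; let $u^{i,\ast}$ be one, $u^i(\lambda^{k_\ell})\rightharpoonup u^{i,\ast}$. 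Passing to the limit in the optimality inequality for $u^i(\lambda^{k_\ell})$ from \eqref{def u_lambda},
\[
\mathbb{E}\big(G_i(u^i(\lambda^{k_\ell}),\omega^i)\big)+\langle\lambda^{k_\ell},\mathbb{E}(u^i(\lambda^{k_\ell}))\rangle_{\mathbb{U}}\le\mathbb{E}\big(G_i(w,\omega^i)\big)+\langle\lambda^{k_\ell},\mathbb{E}(w)\rangle_{\mathbb{U}},\quad w\in\hat{\mathcal{U}}_i,
\]
I would use that $u^i\mapsto\mathbb{E}(G_i(u^i,\omega^i))$ is convex and l.s.c.\ (Assumption \ref{Assum_framework}\ref{convexity:Gi}), hence weakly l.s.c., and that the bilinear term converges because $\lambda^{k_\ell}\to\bar\lambda$ strongly while $\mathbb{E}(u^i(\lambda^{k_\ell}))\rightharpoonup\mathbb{E}(u^{i,\ast})$ (the expectation is a bounded linear map). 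This shows $u^{i,\ast}$ minimizes $u^i\mapsto\mathbb{E}(G_i(u^i,\omega^i))+\langle\bar\lambda,\mathbb{E}(u^i)\rangle_{\mathbb{U}}$, a strictly convex functional (Assumption \ref{assumption_conv_algo}\ref{strict_gi}), so $u^{i,\ast}=u^i(\bar\lambda)=\bar u^i$; uniqueness of the cluster point yields $u(\lambda^k)\rightharpoonup\bar u$ a.s.

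For the unique-saddle-point claim: under Assumption \ref{lip der assu}, Lemma \ref{theorem convergence}\ref{conv a.s lambda} gives $S=\{\bar\lambda\}$. For fixed $\bar\lambda$ the Lagrangian $L(\cdot,\cdot,\bar\lambda)$ decouples into $\min_v\{F_0(v)-\langle\bar\lambda,v\rangle\}$, whose unique solution is $\bar v$ since $F_0$ is strictly convex, and into the $n$ strictly convex subproblems \eqref{def u_lambda}, whose unique solutions are the $\bar u^i$; hence $\argmin L(\cdot,\cdot,\bar\lambda)=\{(\bar u,\bar v)\}$, and by strong duality (Lemma \ref{constraint qualitfication} and convexity of $F_0$) the triple $(\bar u,\bar v,\bar\lambda)$ is the unique saddle point of $L$, $(\bar u,\bar v)$ the unique solution of $(P_2')$, and $\bar u$ minimizes $\tilde J$ over $\hat{\mathcal{U}}$ and, by Theorem \ref{independence of ustar}, over $\mathcal{U}$ as well (uniqueness over $\mathcal{U}$ following from the Jensen step of that proof combined with strict convexity of $G_i$).

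For Part (ii): evaluating the Lagrangian at its minimizers and setting $w_k:=\tfrac1n\sum_i\mathbb{E}(u^i(\lambda^k))$ yields the identity
\[
\tilde J(u(\lambda^k))=\mathcal{W}(\lambda^k)+D_k,\qquad D_k:=F_0(w_k)+F_0^\ast(\lambda^k)-\langle\lambda^k,w_k\rangle_{\mathbb{U}}\ge 0,
\]
the nonnegativity being Fenchel--Young; moreover, since $\nabla F_0(v(\lambda^k))=\lambda^k$ by \eqref{def v_lambda}, $D_k$ is the Bregman divergence of $F_0$ between $w_k$ and $v(\lambda^k)$, so Assumption \ref{lip der assu} gives $0\le D_k\le\tfrac c2\Vert w_k-v(\lambda^k)\Vert_{\mathbb{U}}^2=\tfrac c2\Vert f(\lambda^k)\Vert_{\mathbb{U}}^2$. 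By Lemma \ref{theorem convergence}\ref{conv W} and strong duality $\mathcal{W}(\lambda^k)\to\tilde J(\bar u)$, and $\tilde J(u(\lambda^k))\ge\tilde J(\bar u)$ since $u(\lambda^k)\in\hat{\mathcal{U}}$; so it suffices to prove $D_k\to 0$, i.e.\ $\Vert f(\lambda^k)\Vert_{\mathbb{U}}\to 0$. Here $f(\lambda^k)=\nabla\mathcal{W}(\lambda^k)$ (by \eqref{theoreme enveloppe} and differentiability of $\mathcal{W}$, which holds because $F_0$ is strictly convex), $\nabla\mathcal{W}(\bar\lambda)=0$, $\lambda^k\to\bar\lambda$ strongly, and Part (i) together with $w_k\rightharpoonup\bar w=\bar v$ (feasibility $g(\bar u,\bar v)=0$) gives $f(\lambda^k)\rightharpoonup 0$. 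Upgrading this weak convergence to norm convergence is, I expect, the main obstacle: $F_0$ need not be weakly continuous, so one must genuinely exploit the strong concavity of $\mathcal{W}$ (Lemma \ref{strong conc dual}) and the quadratic growth of $F_0$ (Assumption \ref{assumption_conv_algo}\ref{quad_growth}) to close the argument. Once $D_k\to 0$, $\tilde J(u(\lambda^k))\to\tilde J(\bar u)$, and $\bar u$ is the unique minimizer of $\tilde J$ in $\mathcal{U}$ by the previous step.

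Finally, Part (iii) follows from (ii) and Theorem \ref{optimality th}: for $u\in\hat{\mathcal{U}}$, the estimates \eqref{ineq for F0 conv and diff}--\eqref{bound var} (convexity and $c$-Lipschitz gradient of $F_0$, mutual independence and boundedness of the $u^i$) and \eqref{Jensen ineq used} give $0\le J(u)-\tilde J(u)\le cM^2/n=\varepsilon$; applying this to $u(\lambda^k)$ and using (ii) gives $\limsup_k J(u(\lambda^k))\le\tilde J(\bar u)+\varepsilon$, and since $\bar u=u(\bar\lambda)\in\hat{\mathcal{U}}$ is a decentralized optimal solution of $(P_2)$, Theorem \ref{optimality th} gives $\tilde J(\bar u)\le J(\bar u)\le\inf_{u\in\mathcal{U}}J(u)+\varepsilon$, whence $\limsup_k J(u(\lambda^k))\le\inf_{u\in\mathcal{U}}J(u)+2\varepsilon$ a.s.
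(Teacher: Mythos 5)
Your part (i), the saddle-point identification, and part (iii) are correct and essentially reproduce the paper's argument (same weak-compactness/optimality-inequality/strict-convexity scheme for (i), same $\varepsilon$-bookkeeping via \eqref{ineq for F0 conv and diff}--\eqref{bound var} and \eqref{Jensen ineq used} for (iii)). The problem is part (ii): your reduction $\tilde J(u(\lambda^k))=\mathcal{W}(\lambda^k)+D_k$ with $D_k\ge 0$ the Fenchel gap is a clean and correct identity, but the proof stops exactly at the step that carries all the weight, namely $D_k\to 0$. You acknowledge this yourself (``one must genuinely exploit\dots to close the argument''), and the tools you point to do not close it: weak convergence $f(\lambda^k)\rightharpoonup 0$ says nothing about $\Vert f(\lambda^k)\Vert_{\mathbb{U}}$ in infinite dimension; strong concavity of $\mathcal{W}$ (Lemma \ref{strong conc dual}) applied to the supergradient $f(\lambda^k)$ yields $\langle f(\lambda^k),\bar\lambda-\lambda^k\rangle_{\mathbb{U}}\ge \mu\Vert\lambda^k-\bar\lambda\Vert_{\mathbb{U}}^2$, i.e.\ a \emph{lower} bound $\Vert f(\lambda^k)\Vert_{\mathbb{U}}\gtrsim\Vert\lambda^k-\bar\lambda\Vert_{\mathbb{U}}$, which is the wrong direction; and quadratic growth of $F_0$ only controls $F_0^\ast$, not the norm of $w_k-v(\lambda^k)$. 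So as written, part (ii) is conditional on an unproved claim, and the claim cannot follow from weak convergence alone (take $F_0=\Vert\cdot\Vert^2$ and $w_k=\bar v+e_k$ with $(e_k)$ orthonormal: $w_k\rightharpoonup\bar v$ but $F_0(w_k)\not\to F_0(\bar v)$). This is a genuine gap, not a presentational one.

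For comparison, the paper does not go through $\Vert f(\lambda^k)\Vert_{\mathbb{U}}\to 0$ at all. It first upgrades the limiting inequality \eqref{ineq u weak limit} to an equality and pairs it with the strong convergence of $\lambda^k$ to get the \emph{value} convergence $\E(G_i(u^i(\lambda^k),\omega^i))\to\E(G_i(\bar u^i,\omega^i))$ in \eqref{equation F_i weak limit}; it then runs the same argument on $v(\lambda^k)$ to get $F_0(v(\lambda^k))\to F_0(\bar v)$ in \eqref{conv F0 vk}, and from there asserts $F_0\bigl(\tfrac1n\sum_i\E(u^i(\lambda^k))\bigr)\to F_0(\bar v)$ in \eqref{equation F_0 weak limit}, which summed with \eqref{equation F_i weak limit} gives (ii). (That last transfer from $v(\lambda^k)$ to $w_k$ is itself terse in the paper, but the point is that the paper's route needs continuity of $F_0$ along one specific weakly convergent sequence, whereas yours additionally needs norm convergence of $f(\lambda^k)$, which is strictly stronger and which you have not established.) To repair your write-up you would either have to adopt the paper's value-convergence route, or actually prove $w_k-v(\lambda^k)\to 0$ in norm; neither is done in the proposal.
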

\begin{proof}
Proof of point \ref{weak conv u}.
By Lemma \ref{theorem convergence}.\ref{conv a.s lambda},
the sequence $\{\lambda^k\}$ is bounded in $\mathbb{U}$. Thus, using inequality \eqref{ineq_norm_v} one deduces that $\{v(\lambda^k)\}$ is also bounded in $\mathbb{U}$. Since the sequence $ \{(u(\lambda^{k}),v(\lambda^{k}))\}$ is bounded in $\hat{\mathcal{U}}\times \mathbb{U}$, there exists a weakly convergent subsequence $ \{(u(\lambda^{\theta_{k}}),v(\lambda^{\theta_{k}}))\} $ such that:
\begin{equation}
\label{weak conv u v}
(u(\lambda^{\theta_{k}}),v(\lambda^{\theta_{k}}))
\underset{k\to \infty} {\rightharpoonup}
(u^{\theta},v^{\theta})\in \hat{\mathcal{U}}\times\mathbb{U}.
\end{equation}
Using the definition of $\lambda\mapsto u(\lambda)$ in \eqref{def u_lambda}, it holds for any $k>0$:
\begin{equation}
\label{ineq for th 5.3}
 \mathbb{E}\left[G_{i}(\bar{u}^{i},\omega^i))
+\langle\lambda^{\theta_k},\bar{u}^{i})
\rangle_{\mathbb{U}}\right]   
\geq
\mathbb{E}\left[G_{i}(u^{i}(\lambda^{\theta_k}),\omega^i)
+\langle\lambda^{\theta_k},u^{i}(\lambda^{\theta_k})
\rangle_{ \mathbb{U}}\right)].
\end{equation}
Using that  $u^i\mapsto  G_i(u^i,\omega^i)$ is a.s.
 w.l.s.c. on $\hat{\mathcal{U}}_i$
and the a.s. convergence of $\{\lambda^{k}\}$, resulting from Lemma \ref{theorem convergence}.\ref{conv a.s lambda}, we have from \eqref{ineq for th 5.3} when $k\to\infty$ :
\begin{equation}
\label{ineq u weak limit}
\mathbb{E}\left[G_{i}(\bar{u}^{i},\omega^i)
+\langle\bar{\lambda},\bar{u}^{i})
\rangle_{\mathbb{U}}\right]
\geq
\mathbb{E}\left[G_{i}(u^{i,\theta},\omega^i)
+\langle\bar {\lambda},u^{i,\theta}
\rangle_{\mathbb{U}}\right].
\end{equation}
Since $\bar{u}$ is unique, it follows $u^{\theta}=\bar{u}$ and \eqref{ineq u weak limit} is an equality.
Using that every weakly convergent subsequence of $\{u(\lambda^{k})\}$ has the same weak limit $\bar{u}$, \ref{weak conv u} is deduced.

Proof of point (ii).

From point \ref{weak conv u} and \eqref{ineq u weak limit}, it follows for any $i\in\{1,\ldots,n\}$:
\begin{equation}
\label{equation F_i weak limit}
\underset{k\to \infty}{\lim}
\mathbb{E}\,G_{i}(u^{i}(\lambda^{k}),\omega^i)
= 
\mathbb{E}\,G_{i}(\bar{u}^{i},\omega^i).
\end{equation}
Using \eqref{weak conv u v}, the w.l.s.c. of $F_{0}$, equation \eqref{def v bar}, and applying the same previous argument to $\{v(\lambda^{k})\}$, it holds that:
\begin{equation}
\underset{k\to\infty}{\lim}F_{0}(v(\lambda^{k}))
-\langle \lambda^{k},v(\lambda^{k})
\rangle_{\mathbb{U}}
=
F_{0}(\bar{v})
-\langle \bar{\lambda},\bar{v}
\rangle_{\mathbb{U}},
\end{equation}
and $v(\lambda^{k})\underset{k\to\infty}{\rightharpoonup}\bar{v}$.

From the two previous equalities and the a.s. convergence of $\{\lambda^{k}\}$, it follows:
\begin{equation}
\label{conv F0 vk}
\underset{k\to\infty}{\lim}F_{0}(v(\lambda^{k}))
=
F_{0}(\bar{v}).
\end{equation}
Using that $(\bar{u}, \bar{v},\bar{\lambda})$ is a saddle point, it follows:
\begin{equation}
\label{constraint satisfied}
\frac{1}{n}\sum_{i=1}^{n}\E\,\bar{u}^i
=\bar{v}.
\end{equation}
From \eqref{conv F0 vk} and \eqref{constraint satisfied}, it holds:
\begin{equation}
\label{equation F_0 weak limit}
\underset{k\to\infty}{\lim}F_{0}\left(\frac{1}{n}\sum_{i=1}^{n}\E\,u^i(\lambda^k)\right)
=
F_{0}\left(\frac{1}{n}\sum_{i=1}^{n}\E\,\bar{u}^i \right).
\end{equation}
Then adding \eqref{equation F_i weak limit} and \eqref{equation F_0 weak limit}: $\underset{k\to\infty}{\lim}\tilde{J}(u(\lambda^k))=\tilde{J}(\bar{u})$.

Proof of point (iii).
From point (ii), inequality \eqref{ineq th epsilon} and Theorem \ref{optimality th}, it holds:
\begin{equation}
\underset{k\to\infty}{\lim\sup}\,{J}(u(\lambda^k))
\leq \underset{k\to\infty}{\lim\sup}\,\tilde{J}(u(\lambda^k))+ \varepsilon
=\inf_{u\in\mathcal{U}}\tilde{J}(u) + \varepsilon
\leq  \inf_{u\in\mathcal{U}} J(u)+2\varepsilon.
\end{equation}
\end{proof}

\begin{lemma}
\label{v lip}
Let Assumptions \ref{Assum_framework} and \ref{strong conv hyp}.\ref{strong conv F0} hold, then the function $\lambda\mapsto v(\lambda)$ is Lipschitz on $\mathbb{U}$.
\end{lemma}

\begin{proof}
From the definition of $v$ in \eqref{def v_lambda}, we have for any $\lambda\in \mathbb{U}$ that $
\lambda\in\partial F_{0}(v(\lambda))$.
Thus, for any $\lambda,\mu \in \mathbb{U}$, we have from the strong convexity of $F_{0}$:
\begin{equation}
\left \{
\begin{array}{ll}
 F_{0}(v(\mu)) &  \geq F_{0}(v(\lambda)) + \langle \lambda , v(\mu)-v(\lambda)\rangle_{\mathbb{U}}
 +\alpha\Vert v(\mu)-v(\lambda)\Vert^{2}_{\mathbb{U}} \\
  F_{0}(v(\lambda)) &  \geq F_{0}(v(\mu))
  + \langle \mu , v(\lambda)-v(\mu)\rangle_{\mathbb{U}}
  +\alpha\Vert v(\lambda)-v(\mu)\Vert^{2}_{\mathbb{U}} .
\end{array}
\right.
\end{equation}
Adding the two previous inequalities, after simplications, we get:
\begin{equation}
\langle \lambda - \mu, v(\lambda)-v(\mu)\rangle_{\mathbb{U}} \geq 2 \alpha\Vert v(\lambda)-v(\mu)\Vert^{2}_{\mathbb{U}} .
\end{equation}
Applying the Cauchy-Schwarz inequality and simplifying by $\Vert v(\lambda)-v(\mu)\Vert_{\mathbb{U}}$, we get the desired Lipschitz inequality.
\end{proof}

\begin{lemma}
\label{lip u lambda}
If Assumptions \ref{Assum_framework} and \ref{strong conv hyp}.\ref{strong conv Fi} hold, the function $\lambda\mapsto u(\lambda)$ is Lipschitz on $\mathbb{U}$.
\end{lemma}

\begin{proof}
The proof is similar to the proof of Lemma \ref{v lip}.
\end{proof}

\begin{theorem}
\label{conv to opt sol}
Let Assumptions \ref{Assum_framework}, \ref{lip der assu},
\ref{assumption 1}, and \ref{strong conv hyp}  hold, then:
 $u(\lambda^{k})\xrightarrow[k\to\infty]{}u(\bar{\lambda})\, \mbox{ a.s.}$
\end{theorem}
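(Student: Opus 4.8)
The plan is to combine the almost sure convergence of the dual iterates $\{\lambda^k\}$ from Lemma~\ref{theorem convergence}.\ref{conv a.s lambda} with the Lipschitz continuity of the selection map $\lambda\mapsto u(\lambda)$ established in Lemma~\ref{lip u lambda}. First I would observe that under Assumption~\ref{strong conv hyp}.\ref{strong conv Fi} the subproblems defining $u^i(\lambda)$ are strongly convex, so $V(\lambda)$ and each $u^i(\lambda)$ are genuine single-valued maps (no selection ambiguity), and by Lemma~\ref{lip u lambda} there is a constant $L>0$ with $\Vert u(\lambda)-u(\mu)\Vert_{\mathbb{U}^n}\le L\Vert\lambda-\mu\Vert_{\mathbb{U}}$ for all $\lambda,\mu\in\mathbb{U}$.

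Next, since Assumption~\ref{lip der assu} holds, Lemma~\ref{theorem convergence}.\ref{conv a.s lambda} applies: the dual solution set is a singleton $S=\{\bar\lambda\}$ and $\lambda^k\to\bar\lambda$ strongly in $\mathbb{U}$ almost surely. Combining the two facts, on the almost sure event where $\lambda^k\to\bar\lambda$ we get
\begin{equation*}
\Vert u(\lambda^k)-u(\bar\lambda)\Vert_{\mathbb{U}^n}\le L\,\Vert \lambda^k-\bar\lambda\Vert_{\mathbb{U}}\xrightarrow[k\to\infty]{}0,
\end{equation*}
which is exactly the claimed strong convergence $u(\lambda^k)\to u(\bar\lambda)$ a.s. The same reasoning via Lemma~\ref{v lip} gives $v(\lambda^k)\to v(\bar\lambda)$ a.s. as a free byproduct, and one can note $u(\bar\lambda)=\bar u$, the minimizer of $\tilde J$ on $\hat{\mathcal{U}}$, so that this upgrades the weak convergence of Theorem~\ref{result of liminf conv}.\ref{weak conv u} to strong convergence.

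I do not expect any real obstacle here: the theorem is essentially a corollary obtained by chaining two previously established lemmas, the only subtlety being to check that Assumption~\ref{lip der assu} (needed for Lemma~\ref{theorem convergence}.\ref{conv a.s lambda}) is indeed among the hypotheses — which it is — and that the strong convexity hypotheses of Assumption~\ref{strong conv hyp} make $u(\cdot)$ a well-defined Lipschitz map rather than a multivalued argmin. So the proof is a two-line composition: invoke Lemma~\ref{lip u lambda} for the Lipschitz bound, invoke Lemma~\ref{theorem convergence}.\ref{conv a.s lambda} for $\lambda^k\to\bar\lambda$ a.s., and conclude.
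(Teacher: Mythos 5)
Your proposal is correct and is exactly the paper's argument: the authors likewise obtain the result by combining the Lipschitz continuity of $\lambda\mapsto u(\lambda)$ from Lemma \ref{lip u lambda} (under Assumption \ref{strong conv hyp}) with the almost sure strong convergence $\lambda^k\to\bar\lambda$ from Lemma \ref{theorem convergence}.\ref{conv a.s lambda}. The additional remarks on well-definedness of the selection and on $v(\lambda^k)$ are consistent with, though not spelled out in, the paper's one-line proof.
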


\begin{proof}
The convergence follows from the Lipschitz property of $\lambda\mapsto u(\lambda)$ (as a result 
of Assumption \ref{strong conv hyp}) associated with the a.s. convergence of $\{\lambda^{k}\}$.
\end{proof}

\begin{remark}
Note that Lemma \ref{theorem convergence} and Theorems \ref{result of liminf conv} and \ref{conv to opt sol} still hold when replacing  $\lambda^k$ by $\check{\lambda}^k$ and $Y^k$ by 
$\check{Y}^k$ (as defined respectively in line \ref{alt update Y} and \ref{update price algo} of the \textit{Sampled Stochastic Uzawa} Algorithm \ref{alternative algo desc}). This can be proved by the same argument, using that $\check{Y}^k$ is bounded a.s. and $\E[\check{Y}^k\vert \check{\mathcal{G}}_k]=f(\check{\lambda}^k)$ for any $k$, where:
\begin{equation}
\check{\mathcal{G}}_k 
=
\sigma\Big(\{W^{I^{p}_\ell,p}\}: 1\leq \ell \leq m, \, p\leq k\}\Big)
\vee
\sigma\Big(\{I^{p}_\ell\}: 1\leq \ell \leq m, \, p\leq k\}\Big),
\end{equation}
with $W^{I^{k}_\ell,p}$ and  $I^{p}_\ell$ defined respectively in line \ref{W:def} and \ref{I:def} of the \textit{Sampled Stochastic Uzawa} Algorithm \ref{alternative algo desc}.
\end{remark}

\begin{remark}
We wish to emphasize that the proposed Algorithm \ref{alternative algo desc} is particularly suitable for practical distributed implementations in agent-based scenarios. In these cases, the quantity $\lambda$ can be interpreted as a common price signal that is broadcast to the independent agents, which in turn compute independently their optimal solution $u(\lambda)$ on the basis of their local parameters (step \ref{update local}).
\end{remark}
To illustrate the results, we consider in the next section stochastic control problems in both continuous and discrete time settings.
\section{Application to stochastic control}\label{examples_sto_control}

\subsection{Continuous time setting}
Let $(\Omega, \mathcal{F},\mathbb{F},\mathbb{P})$ be a complete filtered probability space on which $W=(W^{i})_{i=1,\ldots,n}$ is a $n\times d-$dimensional Brownian motion such that, for any $t\in[0,T]$ and $i\in\{1,\ldots,n\}$, $W^{i}_{t}$ takes value in $\mathbb{R}^{d}$ and generates the filtration $\mathcal{F}=(\mathcal{F}_{t})_{0\leq t\leq T}$. In the considered notation, $\mathbb{P}$ stands for the Wiener measure associated with this filtration and $\mathbb{F}$ for the augmented filtration by all $\mathbb{P}$-null sets. The following notations are used:
\begin{equation*}
\mathbb{X} :=  \{\varphi :\Omega
\to \mathcal{C}([0,T],\mathbb{R}^{d}) \,
|\,\varphi(\cdot)\,\mbox{is}\,\mathbb{F}-\mbox{adapted}, \,\Vert \varphi\Vert_{\infty,2}:=\mathbb{E}
\underset{ 
\substack{1\leq k\leq d \\ s\in [0,T]}}
{\sup}|\varphi_k(s)|^{2}<\infty\},
\end{equation*}
\begin{equation*}
\U:=L^{2}([0,T],\mathbb{R}^{p}):=  \{\varphi  :[0,T]
\to \mathbb{R}^{p}  \,
|\,\int_{0}^{T}\sum_{k=1}^p|\varphi_k(t)|^{2}dt<\infty\},
\end{equation*}
For any $i\in\{1,\ldots,n\}$, the feasible set of controls is defined by:
\begin{equation}
\begin{array}{ll}
\mathcal{U}_{i}:= &\{v:\Omega\times [0,T]\to \mathbb{R},v(\cdot)\mbox{ is }\mathbb{F}-\mbox{prog. measurable},  \\
 & v(\omega)\in \mathbb{U}\mbox{ and }v_{t}(\omega)\in[-M_{i},M_{i}]^p, \mbox{ for a.a. }(t,\omega)\in [0,T]\times \Omega\},
\end{array}\vspace{0.1 cm}
\label{def of U}
\end{equation}
and we set $M:=\underset{i\in\{1,\ldots,n\}}{\max}\, M_{i}$, where $M_i>0$.

Each local agent $i=1,\ldots,n$ is supposed to control its state variable through the control process $u^{i}\in\mathcal{U}_{i}$ and is subject to independent uncertainties. More specifically, the state process of each agent, $X^{i,u^{i}}=(X^{i,u^{i}}_{t})_{t\in[0;T]}$, for $i = 1,\ldots,n$ takes values in $\mathbb{R}^{d}$ and follows the dynamics  for $i\in\{1,\ldots,n\}$:
\begin{equation}
    \left\{
    \begin{array}{l l l }
    dX^{i,u^{i}}_{t}&=&\mu_{i}(t,u^{i}_{t}(\cdot,W^i),X^{i,u^{i}}_{t})dt+\sigma_{i}(t,X^{i,u^{i}}_{t})dW_{t}^{i}, \mbox{ for }t\in[0,T], \\
    X_ {0}^{i,u^{i}} & = & x_{0}^{i}\in\mathbb{R}^{d};
    \end{array}
    \right.
    \label{evolution SDE}
\end{equation}
We assume that, for any $i$, there exist five functions  $\alpha_i\in L^{\infty}([0,T],\mathbb{R}^{d\times p})$, $\beta_i,\theta_i\in L^{\infty}([0,T],\mathbb{R}^{d\times d}), \gamma_i\in L^{\infty}([0,T],\mathbb{R}^{d})$ and $\xi_i \in L^{\infty}([0,T],\mathbb{R}^{d\times d\times d})$ such that, for any $(t,\nu,x)\in[0,T]\times [-M,M]^p\times \mathbb{R}^d$:
\begin{equation}
\label{linear dynamic}
\mu_{i}(t,\nu,x)
=\alpha_{i}(t)\nu+\beta_{i}(t)x+\gamma_{i}(t)
\,\mbox{ and }
\,\sigma_{i}(x,t)=\xi_{i}(t)x+\theta_{i}(t).
\end{equation}
Without loss of generality, the initial states $x_{0}^{i}$ are supposed to be deterministic.
The process $X^{i,u^i}$ is $\mathbb{F}$-progressively measurable.
For all $i$, $\mathcal{F}^{i}$ stands for the natural filtration of the Brownian motion $W^{i}$.

\subsubsection{On the well-posedness of $(P_1)$}

In this section, we discuss some conditions under which Problem $(P_{1})$ is well-posed.
\begin{lemma}
\label{lem:X}
Let $i\in\{1,\ldots,n\}$ and $v\in\mathcal{U}_{i}$ be a control process.
The map $v\mapsto X^{i,v}$ is linear continuous from $\mathcal{U}_i$ to $\mathbb{X}$ and
there exists a unique process $X^{i,v}\in \X$ satisfying  \eqref{evolution SDE} (in the strong sense) such that, for any $p\in[1,\infty)$:
\begin{equation}
\label{eq:supXp}
\E\big[\sup_{\substack{0\leq t\leq T\\1\leq k\leq d}}
\vert X_{k,t}^{i,v}\vert^r\big]
<C(r,T,x_0,K)<\infty \ .
\end{equation}
\end{lemma}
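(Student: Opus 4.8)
The plan is to apply the standard theory of linear SDEs with bounded coefficients. First I would observe that, given a fixed control $v\in\mathcal{U}_i$, the drift $(t,x)\mapsto \mu_i(t,v_t(\omega),x) = \alpha_i(t)v_t(\omega) + \beta_i(t)x + \gamma_i(t)$ and the diffusion $(t,x)\mapsto \sigma_i(t,x) = \xi_i(t)x + \theta_i(t)$ are affine in $x$ with coefficients that are $\mathbb{F}$-progressively measurable (since $v$ is) and uniformly bounded in $(t,\omega)$, because $\alpha_i,\beta_i,\gamma_i,\xi_i,\theta_i \in L^\infty$, $v$ is bounded by $M_i$, and the $x_0^i$ are deterministic. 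In particular the maps $x\mapsto \mu_i(t,v_t(\omega),x)$ and $x\mapsto\sigma_i(t,x)$ are globally Lipschitz in $x$ uniformly in $(t,\omega)$ with a deterministic constant, and they satisfy the linear growth condition $\lvert\mu_i(t,v_t(\omega),x)\rvert + \lvert\sigma_i(t,x)\rvert \le K(1+\lvert x\rvert)$ a.s. Hence the classical existence–uniqueness theorem for SDEs (e.g.\ via Picard iteration / Banach fixed point in $L^2$) yields a unique strong solution $X^{i,v}\in\X$.

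Next I would establish the moment bound \eqref{eq:supXp}. For $r\ge 2$ this is the standard $L^r$ estimate for SDEs with linearly growing coefficients: apply Itô's formula to $\lvert X_t^{i,v}\rvert^r$, use the Burkholder--Davis--Gundy inequality on the martingale part, the linear growth bound on $\mu_i$ and $\sigma_i$, and then Grönwall's lemma to obtain $\E(\sup_{s\le t}\lvert X_s^{i,v}\rvert^r)\le C(r,T,x_0^i,K)$, uniformly over $v\in\mathcal{U}_i$ since the growth constant $K$ is uniform. For $r\in[1,2)$ the bound follows from the case $r=2$ by Jensen's inequality. Taking coordinates, $\E(\sup_{t\le T,\,k\le d}\lvert X_{k,t}^{i,v}\rvert^r)\le \E(\sup_{t\le T}\lvert X_t^{i,v}\rvert^r)<\infty$, which in particular gives $X^{i,v}\in\X$ (the case $r=2$).

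Finally, for linearity and continuity of $v\mapsto X^{i,v}$: given $v,w\in\mathcal{U}_i$, the difference $Z := X^{i,v}-X^{i,w}$ solves a linear SDE with the same homogeneous part, $dZ_t = (\alpha_i(t)(v_t-w_t) + \beta_i(t)Z_t)\,dt + \xi_i(t)Z_t\,dW_t^i$, $Z_0=0$; this makes linearity transparent (if $v = \kappa v^{(1)} + v^{(2)}$ then by uniqueness $X^{i,v} = \kappa X^{i,v^{(1)}} + X^{i,v^{(2)}}$), and applying the same Itô/BDG/Grönwall estimate to $Z$ gives $\E(\sup_{s\le T}\lvert Z_s\rvert^2)\le C\,\E\int_0^T\lvert v_t-w_t\rvert^2\,dt$, i.e.\ $\lVert X^{i,v}-X^{i,w}\rVert_{\infty,2}\le C\lVert v-w\rVert_{\U}$, which is the asserted continuity from $\mathcal{U}_i$ to $\X$. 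I expect the only mildly delicate point to be bookkeeping the dependence of constants and checking the progressive-measurability of the composed coefficient $t\mapsto\mu_i(t,v_t(\omega),x)$; everything else is the textbook linear-SDE argument, so I would state it concisely and cite a standard reference rather than reproduce the estimates in full.
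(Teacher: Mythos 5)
Your proposal is correct and follows essentially the same route as the paper: the paper simply verifies nothing explicitly and cites the standard existence--uniqueness and moment-estimate theorems for SDEs with Lipschitz, linearly growing coefficients (Theorems 3.6 and 4.4 of Chapter 2 in Mao's book), which is exactly the textbook machinery you sketch. If anything, your write-up is more complete, since you also prove the linear continuity of $v\mapsto X^{i,v}$ via the difference SDE and a Gr\"onwall estimate, a part of the statement the paper's own two-line proof does not address.
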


\begin{proof}
The proof for the existence and uniqueness of a solution of \eqref{evolution SDE} relies on \cite[Theorem 3.6, Chapter 2]{mao2007stochastic}. The inequality is a result of \cite[Theorem 4.4, Chapter 2 ]{mao2007stochastic}. 
\end{proof}
Let $F_{0}: \U 
\rightarrow \mathbb{R}$ be a proper, convex and lower semi continuous  function, satisfying Assumptions \ref{lip der assu} and \ref{assumption_conv_algo}.\ref{quad_growth}.
For any $i\in\{1,\ldots,n\}$, we assume that there exists $F_i$ such that the local cost $G_i$ is of the form: 
\begin{equation}
\label{def_g_sto_cont}
u^i\mapsto G_i(u^i(\cdot,\omega^{-i}),\omega^i)=F_i(u^i(\omega^i,\omega^{-i}), X^{i,u^i}(\omega^i)),
\end{equation}
where $F_{i}:\U \times \mathcal{C}([0,T]\times \mathbb{R}^d)\rightarrow \mathbb{R}_+$ is a proper and lower semi continuous function. Additional assumptions are formulated below.
\begin{assumption}
\label{assum functions}
For any $i\in\{1,\ldots,n\}$:
\begin{enumerate}
[label={\normalfont (\roman*)}]
\item \label{hyp 221} {\color{black}$F_{i}$ is jointly convex w.r.t. both variables and strictly convex w.r.t. the first variable}. 
\item \label{hyp 222} there exists a positive integer $r$ such that
$F_{i}$ has $r$-polynomial growth, i.e there exists $K>0$ such that for any $x^i\in \mathcal{C}([0,T],\mathbb{R}^{d})$ and $u^i\in \U $: $\vert F_i(u^i,x^i)\vert \leq K (1+\sup_{\substack{0\leq t\leq T \\ 0\leq k \leq n}} \vert x^i_{k,t}\vert ^r)$.
\end{enumerate}
\end{assumption}

Note that if Assumption \ref{assum functions}.\ref{hyp 221} is satisfied, using the definition of $F_0$ and $F_i$ above in this section, Assumptions \ref{Assum_framework} to \ref{assumption_conv_algo} hold.
\begin{remark}
 It is worth highlighting the following aspects regarding Assumption \ref{assum functions}:
\begin{enumerate}
\item Assumption \ref{assum functions}.\ref{hyp 221} is satisfied if there exist $g_{i}:L^{2}((0,T),\mathbb{R}^p)\to \mathbb{R}$ strictly convex and $h_{i}:\mathcal{C}([0,T],\mathbb{R}^d)\to \mathbb{R}$ convex, such that $F_{i}(v,X)=g_{i}(v)+h_{i}(X)$.
\item Observe that Assumption \ref{assum functions} satisfies Assumptions \ref{Assum_framework}.\ref{convexity:Gi} and  \ref{assumption_conv_algo}.\ref{strict_gi}
\end{enumerate}
\end{remark}

From now on, Assumption \ref{assum functions} is in force. Now the optimization problems  $(P_{1}^{c})$ and 
$(\hat{P}_{2}^{c})$
can be clearly defined:          
\begin{equation}
(P_{1}^{c})\left\{
\begin{array}{l}
{ \inf_{u\in\mathcal{U}}J^{c}(u)}\\
{ J^{c}(u):= \E\left[F_{0}(\frac{1}{n}\sum_{i=1}^{n}u^i(\omega))+\frac{1}{n}\sum_{i=1}^{n}F_{i}(u^i(\omega),X^{i,u^i}(\omega^i))\right]},
\end{array}
\right.
\label{P1 c}
\end{equation}
and
\begin{equation}
(\hat{P}_{2}^{c})
\left\{
\begin{array}{l}
{ \inf_{u\in \hat{\mathcal{U}}
}\tilde{J}^{c}(u)}\\
{ \tilde{J}^{c}(u):= F_{0}\left(\frac{1}{n}\sum_{i=1}^{n}\E\,u^i\right)+\frac{1}{n}\E\left[\sum_{i=1}^{n}F_{i}(u^i(\omega),X^{i,u^i}(\omega^i))\right]},
\end{array}
\right.
\label{P2 c}
\end{equation}
Using the results of Section \ref{approx_results}, we can state the following corollary:
\begin{corollary}
\begin{enumerate}[label=(\roman*)]
\item \label{well posedness continuous} Problems $(P_{1}^{c})$ and
$(\hat{P}_{2}^{c})$ admit both a unique solution.
\item \label{eps opt con}Any optimal solution of Problem
$(\hat{P}_{2}^{c})$ is an $\varepsilon$-optimal solution, where $\varepsilon=cM^{2}/n$, of Problem $(P_{1}^{c})$. 
\end{enumerate}
\end{corollary}

\begin{proof}
The proof of point \ref{well posedness continuous} is a specific case of Lemma \ref{existence solution P1}.
Similarly, point \ref{eps opt con} is a particular case of Theorem \ref{optimality th}.
\end{proof}

\begin{remark}
A practical example of this type of stochastic optimization problem is illustrated in Section \ref{example illustration}, which considers the interactions between a large population of price-responsive self-interested domestic appliances and a central system operator which has to meet the prescribed levels of demand at minimum generation costs. 
\end{remark}

\subsection{Discrete time setting}
\label{subsection_discrete}
 The main results of the paper are instantiated to the discrete time setting in this subsection. 
The following notations are used.
\begin{itemize}
    \item Let  $n\in\mathbb{N}^{\ast}$ be the number of agents, $d,p\in\mathbb{N}^{\ast}$ the dimension respectively of their state and control variables at any time step, and $T\in\mathbb{N}^{\ast}$ the finite time  horizon.
    \item For any matrix $M$,  its transpose is denoted by 
    $M^{\top}$.
    \item We consider a global noise process as a sequence of independent random variables $(W_{1},\ldots,W_{T})$, where   for any $t\in\{1,\ldots,T\}$, $W_{t}$ is a vector of $d$-dimensional 
    independent random variables, with finite variance,
    defined on the probability space $(\Omega,\mathcal{F},\mathbb{P})$: $
        W_{t}:=(W_{t}^{1}, \ldots, W_{t}^{n})$, with $W_t^i\in \mathbb{R}^d$.
    For any $i\in\{1,\ldots,n\}$ and $t\in\{1,\ldots,T\}$ we define $\mathcal{F}^{i}_{t}:=\sigma(W_{1}^{i},\ldots,W_{t}^{i})$ and  $\mathcal{F}_{t}:=\otimes_{i=1}^{n}\mathcal{F}_{t}^i$.
    \item The space $\X$ is defined by: \begin{equation}
     \X:= \{x=(x_{0},\ldots,x_{T})\,\vert \,
    \forall t\in \{0,\ldots, T\},\mathbb{R}^d\ni 
    x_t\mbox{ is }
    \mathcal{F}_t-\mbox{measurable and }\E\Vert x_t\Vert_2^2<\infty\}.
    \end{equation}

    \item For any $i\in\{1,\ldots,n\}$, we define the space of control $\mathcal{U}^{i}$ of agent $i$ by:
    \begin{equation}
    \begin{array}{ll}
     \mathcal{U}^{i}:=& \{u^i=(u^i_{0},\ldots,u^i_{T-1})\,\vert \,
    \forall t\in \{0,\ldots, T-1\},\mathbb{R}^p\ni 
    u^i_t\mbox{ is }\\
    &\mathcal{F}_t-\mbox{measurable and }u_k^i(\omega)\in[-M,M]^p\quad \mathbb{P}\mbox{-a.s.} \},
    \end{array}
    \end{equation}
    where $M>0$.
   We finally set $\mathcal{U}:=\prod_{i=1}^{n}\mathcal{U}^{i}$.

    \item  For any $i\in\{1,\ldots,n\}$, $X^{i,u^{i}}:=(X^{i,u^i }_{0},\ldots,X^{i,u^i}_{T})\in \X$ is the state trajectory of agent $i$ controlled by $u^{i}\in \mathcal{U}^i$.  
    We have the following dynamics:
    \begin{equation}
    \left\{
    \begin{array}{l l l l}
    X_{t+1}^{i,u^i}&=& A^iX_{t}^{i,u^i}+B^iu^i_{t}+W^i_{t+1},
    & \mbox{for }t\in\{0,\ldots,T-1\} ,\\
    X_{0}^{i,u^i}     & = & x_{0}\in\mathbb{R}^{d},&\\
    \end{array}
    \right.
    \label{disc linear SDE}
    \end{equation}
    where $A^i\in \mathbb{R}^{d\times d}$ and  $B^i\in \mathbb{R}^{d\times p}$.
 \end{itemize}
 Let  $F_{0}\,:\,\mathbb{R}^{p\times T}\to \bar{\mathbb{R}}$ be proper, lower semi continuous, convex and satisfy Assumptions \ref{lip der assu} and \ref{assumption_conv_algo}.\ref{quad_growth}. Similarly to the previous subsection, we assume that, for any $i$, there exists  a function $F_{i}\,:\,\mathbb{R}^{p\times T}\times\mathbb{R}^{d\times T}\to \mathbb{R}$ such that $G_i$ and $F_i$ satisfy \eqref{def_g_sto_cont}, and $F_i$ satisfies Assumption \ref{assum functions} for an integer $r$ such that $\E\,\Vert W_t \Vert^r$ is finite for any $t\in \{1,\ldots, T\}$.
 
Now, for any $n\in\mathbb{T}^{\ast}$, the optimization problems  $(P_{1}^{d})$ and $(\hat{P}_{2}^{d})$
can be clearly defined:          
\begin{equation}
(P_{1}^{d})\left\{
\begin{array}{l}
{ \inf_{u\in\mathcal{U}}J^{d}(u)}\\
{ J^{d}(u):= \E\left[F_{0}(\frac{1}{n}\sum_{i=1}^{n}u^i)+\frac{1}{n}\sum_{i=1}^{n}F_{i}(u^i,X^{i,u^i})\right]},
\end{array}
\right.
\label{P1 h}
\end{equation}
and
\begin{equation}
(\hat{P}_{2}^{d})
\left\{
\begin{array}{l}
{ \inf_{u\in \hat{\mathcal{U}}
}\tilde{J}^{d}(u)}\\
{ \tilde{J}^{d}(u):= F_{0}\left(\frac{1}{n}\sum_{i=1}^{n}\E\,u^i\right)+\frac{1}{n}\E\left[\sum_{i=1}^{n}F_{i}(u^i,X^{i,u^i})\right]}.
\end{array}
\right.
\label{P2 h}
\end{equation}
In the same spirit as in the previous subsection, we have the following results, which will be useful for the next section.
\begin{corollary}
\begin{enumerate}[label=(\roman*)]
\item \label{well posedness discrete} Problems $(P_{1}^{d})$ and $(\hat{P}_{2}^{d})$
admit both a unique solution.
\item \label{eps opt dis}Any optimal solution of Problem $(\hat{P}_{2}^{d})$
is an $\varepsilon$-optimal solution, where $\varepsilon=cM^{2}/n$, of Problem $(P_{1}^{d})$. 
\end{enumerate}
\end{corollary}

\begin{proof}
The proof of point \ref{well posedness discrete} is analogous to the one of Lemma \ref{existence solution P1}.
Similarly, proof of point \ref{eps opt dis} is analogous to the one of Theorem \ref{optimality th}.
\end{proof}
One can implement the \textit{Stochastic Uzawa} (Algo \ref{proposed algo desc}) and the \textit{Sampled Stochastic Uzawa} (Algo \ref{alternative algo desc}) in this discrete time setting with Lemma  \ref{theorem convergence} and Theorems \ref{result of liminf conv} and \ref{conv to opt sol} still ensuring the algorithm convergence. 

\section{A numerical example: the LQG (Linear Quadratic Gaussian) problem}

This section aims at illustrating numerically the convergence of the \textit{Stochastic Uzawa} (Algo \ref{proposed algo  desc}) on a simple example. The speed of convergence of the algorithm is evaluated according to the number of dual iterations and of agents. A linear quadratic formulation is considered, with $n$ agents in a discrete setting Problem $(\hat{P}_{2}^{LQG})$. 
We use the notations of Section \ref{subsection_discrete}. 

This framework constitutes a simple test case, since the (deterministic) Uzawa's algorithm can be performed, and one can compare the resulting multiplier estimate  
with the one provided by the \textit{Stochastic Uzawa} algorithm. All the assumptions required for the convergence of the \textit{Stochastic Uzawa} (Algo \ref{proposed algo  desc}) are satisfied for Problem  $(\hat{P}_2^{LQG})$.
Moreover, the optimal solutions to the local problems (line \ref{update local} of Algorithm \ref{proposed algo desc}) can be resolved analytically.

Problem  $(\hat{P}_{2}^{LQG})$ 
is similar to $(\hat{P}_2^d)$
defined in \eqref{P2 h} but, in this specific case, the function $F_0$ is a quadratic function of the aggregate strategies of the agents
    \begin{equation}
    F_0\left(\frac{1}{n}\sum_{i=1}^n \E \,u^i\right) :=\frac{\nu}{2} \sum_{t=0}^{T}\left(\frac{1}{n}\sum_{i=1}^n \E\,u^i_{t}-r_{t}\right)^{2},
    \end{equation}
    where $\nu>0$ and $\{r_{t}\}$ is a deterministic target sequence. Similarly, the cost  term $F_i$ of the individual agents is expressed as a quadratic function of their state $X^{i,u^{i}}$ and control $u^{i}$
    \begin{equation}
    F_{i}(u^{i},X^{i,u^{i}}):=\frac{1}{2}\left(\sum_{t=0}^{T}
    d_{i}({X_{t}^{i,u^{i}}})^{2}
    + q_{i}(u_{t}^{i})^{2}\right)
    +\frac{d^{f}_{i}}{2}({X_{T}^{i,u^{i}}})^{2},
    \end{equation}
where $q_{i}>0$ and $d_{i}>0$ for any $i\in\{1,\ldots,n\}$. Defining the matrices $D=\mbox{diag}(d_{1},\ldots,d_{n})$, $Q=\mbox{diag}(q_{1},\ldots,q_{n})$ and $D^{f}=\mbox{diag}(d_{1}^{f},\ldots,d_{n}^{f})$, we get:
\begin{equation}
\sum_{i=1}^n F_{i}(u^{i},X^{i,u^{i}}) =\frac{1}{2}\left(
\sum_{t=0}^{T}
{X_{t}^{u \top}}DX_{t}^{u}
+ {u_{t}^{\top}}Qu_{t}\right)
+\frac{1}{2}{X_{T}^{u \top}}D^{f}X_{T}^{u},
\end{equation}
where, for any $t\in\{0,\ldots,T\}$,
    $X_{t}^{u}:=(X^{1,u^1}_{t},\ldots,X^{n,u^n}_{t})\in\mathbb{R}^{ n}$ is the controlled state vector of all the agents.
Now the optimization Problem $(\hat{P}_{2}^{LQG})$
is clearly defined. 

To find the optimal multiplier and control of $(\hat{P}_{2}^{LQG})$,
the \textit{Stochastic Uzawa} Algorithm \ref{proposed algo  desc} is applied. In this specific case, the lines \ref{opt v comp} and \ref{opt u comp} take respectively the following form at any dual iteration $k$:
\begin{equation}
\label{local:prob}
u^{i}(\lambda^{k}):=\underset{u^{i}\in \hat{U}^{i}}{\argmin}\,\left\{\mathbb{E}
\left[\frac{1}{2}\big(\sum_{t=0}^{T}
    d_{i}({X_{t}^{i,u^{i}}})^{2}
    + q_{i}(u_{t}^{i})^{2}+\lambda_{t}^{k}u_{t}^{i}\big)
    +\frac{d^{f}_{i}}{2}({X_{T}^{i,u^{i}}})^{2} \right]\right\},
\end{equation}
\begin{equation}
v(\lambda^{k}):=\underset{v\in \mathbb{R}^{T}}{\argmin}
\left\{(\sum_{t=0}^{T}
    \nu\, (v_{t}-r_{t})^{2} - \lambda_{t}^{k}v_{t}
    \right\}.
\end{equation}
The optimization problem \eqref{local:prob} solved by each local agent also falls within the LQG framework. One can solve  these problems using the results of \cite{todorov2006optimal}. The resolution via Riccati equations of \eqref{local:prob} shows that $u^{i}(\lambda^{k})$ is a linear function of the state $X^{i,u^i}$ and of the price $\lambda^k$. Therefore, in this specific example, one can explicitly compute { $\E[u_t^{i}(\lambda^k)\vert \mathcal{G}_{k}]$ for any $t$, with $\mathcal{G}_{k}$ as defined in \eqref{def:G}.

Within this described framework, it is possible to implement the (deterministic) Uzawa's algorithm and use it as a reference to evaluate the performances of the \textit{Stochastic Uzawa} algorithm.

Different population sizes $n$ are considered, with $n$ ranging between $1$ and $10^4$. Similarly, the algorithm is stopped after different numbers of dual iteration $k$, ranging between $10$ and $10^4$. In order to evaluate the bias and variance of the \textit{Stochastic Uzawa} algorithm, this has been  performed over $J=1000$ runs.

It is possible to define a Problem $(\hat{P}_2'^{LQG})$ and a dual Problem $(D^{LQG})$ from Problem $(\hat{P}_2^{LQG})$ following the same approach presented  in \eqref{eq:P2v} and \eqref{dual problem formulation} for the definition of $(\hat{P}_2')$ and $(D)$, respectively,  from $(\hat{P}_2)$.
It can be shown that, for any $n$, there exists a unique optimal multiplier $\bar{\lambda}^{n}$, solution of $(D^{LQG})$.}
For any $n$, the quantity $\lambda^{k,n,j}$ denotes the dual price computed during the $j^{th}$ simulations ($j=1,\ldots,J$) of the \textit{Stochastic Uzawa} algorithm, after $k$ dual iterations.

For any $n$, the deterministic multiplier $\bar{\lambda}^{n}$ is obtained by applying Uzawa's algorithm, after $10^4$ dual iterations. To this end, we applied the \textit{Stochastic Uzawa} Algorithm \ref{proposed algo  desc}, where we ignored the line \ref{update Y} and we replaced the update of $\lambda^{k}$ line \ref{alt update price algo} by: $
\bar{\lambda}^{k+1}\leftarrow \bar{\lambda}^k + \rho_k ( \frac{1}{n}\sum_{i=1}^{n} \E\,u^i(\bar{\lambda}^k)-v(\bar{\lambda}^k))$.

At each dual iteration $k$, the computation of $\E\,u^i(\lambda^k)$ is straightforward in this specific case, since $u^i(\lambda^k)$ is a linear function of $X^{i,u^i}$ and $\lambda^k$, as explained in the previous subsection.

The multipliers $\lambda^{k,n,j}$ and $\bar{\lambda}^n$, obtained by applying the \textit{Stochastic Uzawa} and \textit{Uzawa algorithms}, respectively, are now compared.
For any $k$ and $n$,  let $b_{k,n}$, $v_{k,n}$ and $\ell_{k,n}$
denote an estimation of the bias, the variance and the $L^2$ norm of the error, respectively, as computed via Monte Carlo method with $J$ simulations.
For any $k$ and $n$, these quantities are defined as follows: 
\begin{align*}
    b_{k,n}&:=\frac{1}{J}\sum_{j=1}^{J}
 \lambda^{k,n,j}-\bar{\lambda}^{n},
\\
 v_{k,n} &:= \frac{1}{J}\sum_{j=1}^{J} \Vert  \lambda^{k,n,j}-\bar{\lambda}^{n} - b_{k,n} \Vert_{2}^{2},
\\
\ell_{k,n} &:= v_{k,n} + \Vert b_{k,n} \Vert_{2}^{2}.
\end{align*}

\begin{figure*}
\centering
\begin{minipage}[t]{.30\linewidth}
\centering
\pgfplotsset{xmin=0.8, xmax=4.2, ymin=-4.8, ymax=5.5,
every axis title shift={6 pt}}
\begin{tikzpicture}[font=\tiny]
\begin{axis}
[legend entries=
{$n= 10$, $n= 10^{2}$ , $n= 10^{3}$,$n= 10^{4}$},
legend style={at={(1,1)},anchor=north east},legend columns=2,
grid=major,height=5cm,width=4.8cm,
  xtick={1,2,3,4},
  ytick={-4,-2,...,4},
xlabel={$\log_{10}(k)$},
title={}, cycle list name=black white]
\node at (240,630) {slope\,$\simeq -0.8$};

\addplot table[x index=0,y index=2]{log_variance_price_x_iteration.txt};

\addplot table[x index=0,y index=3]{log_variance_price_x_iteration.txt};

\addplot table[x index=0,y index=4]{log_variance_price_x_iteration.txt};

\addplot table[x index=0,y index=6]{log_variance_price_x_iteration.txt};

\end{axis}
\end{tikzpicture}
\caption{ Variance term $\log_{10}(v_{k,n})$, expressed as a function of $k$, for different number of agents $n$.}
\label{variance price it study iteration}
\end{minipage}\hspace*{0.1cm}
\begin{minipage}[t]{.30\linewidth}
\centering
\pgfplotsset{xmin=-0.2, xmax=4.2, ymin=-4.8, ymax=6,
every axis title shift={6pt}}
\begin{tikzpicture}[font=\tiny]
\begin{axis}[legend entries=
{$k=10$, $k=10^{2}$, $k=10^{3}$, $k=10^{4}$},
legend style={at={(1,1)},anchor=north east},legend columns=2,
grid=major,,height=5cm,width=4.8cm,
  xtick={0,1,2,3,4},
  ytick={-4,-2,...,4},
xlabel={$\log_{10}(n)$},
title={}, cycle list name=black white]
\node  at (325,630)  {slope\,$\simeq - 1$};
\addplot  table[x index=0,y index=1]{log_variance_price_x_population.txt};

\addplot table[x index=0,y index=3]{log_variance_price_x_population.txt};

\addplot table[x index=0,y index=5]{log_variance_price_x_population.txt};

\addplot table[x index=0,y index=7]{log_variance_price_x_population.txt};

\end{axis}
\end{tikzpicture}
\caption{ Variance term $\log_{10}(v_{k,n})$, expressed as a function of $n$, for different number of iterations $k$.}
\label{variance price it study population}
\end{minipage}\hspace*{0.1cm}
\begin{minipage}[t]{.30\linewidth}
\centering
\begin{tikzpicture}[font=\tiny]
\begin{axis}[legend entries={ $n= 10^{4}$},legend style={at={(1,1)},anchor=north east},legend columns=1,
grid=major,height=5cm,width=4.8cm,
xlabel={$\log_{10}(k)$},
title={}, cycle list name=black white]
\addplot table[x index=0,y index=6]{log_bias_price_x_iteration.txt};
\end{axis}
\end{tikzpicture}
\caption{ Bias term $\log_{10}(\Vert b_{k,n} \Vert_{2}^{2})$, expressed as a function of $k$, given the number of agents $n=10^4$.}
\label{bias price it study iteration}
\end{minipage}
\end{figure*}

Since numerical simulations are based on finite dimensional approximations, it is relevant to compare the empirical convergence rates, shown in Figures \ref{variance price it study iteration}-\ref{bias price it study iteration},  with the associated theoretical asymptotic rates presented in the literature for a finite dimensional setting.

In Figure \ref{variance price it study iteration}, we observe a behavior in $1/k^{\alpha}$ (with $\alpha\simeq 0.8$) of the variance $v_{k,n}$  w.r.t. the number of iterations $k$. This rate of convergence is consistent with  \cite[Theorem 2.2.12, Chapter 2]{duflo2013random}, 
where the best asymptotic convergence rate for the Robbins-Monro algorithm is proved to be of the order of $1/k$ (for the quadratic error).

In Figure \ref{variance price it study population}  we observe a behavior in $1/n^{\beta}$ (with $\beta\simeq 1$) of the variance $v_{k,n}$  w.r.t. the number of agents $n$. This is expected, following \cite[Theorem 2.2.12, Chapter 2]{duflo2013random} and the observation that the variance of $Y^{k+1}$ is {of the order of} $1/n$ for any iteration $k$.

Finally, in Figure \ref{bias price it study iteration}, we note that the bias $\Vert b_{k,n}\Vert^{2}$ decreases faster  than $1/k$ w.r.t. the number of iterations $k$. Thus, for a large number of iterations ($k>0$), the dominant term impacting the error $l_{k,n}$ is the variance $v_{k,n}$.

\section{Price-based coordination of a large population of thermostatically controlled loads} 
\label{example illustration}

The goal of this section is to
demonstrate 
the applicability of the presented approach for the coordination of TCLs 
in the context of flexible power systems.
In particular, the problem analyses the daily operation of a power system with a large penetration of price-responsive TCLs, adopting a modelling framework similar to \cite{depaola2019mean}. Two distinct elements are considered: i) a system operator, which must schedule a portfolio of generation assets in order to satisfy the energy demand at a minimum cost, and ii) a population of price-responsive TCLs that individually determine their ON/OFF power profile in response to price, 
with the objective of minimizing their operating cost while fulfilling users’ requirements. 
Note that the operations of the two elements are interconnected, since the aggregate power consumption of the TCLs will modify the system-level demand profile that needs to be accommodated by the system operator.

\subsection{Formulation of the problem}

In the considered problem, the function $F_0$ represents the minimized power production cost and corresponds to the resolution of an Unit Commitment (UC) problem. The UC determines generation scheduling decisions (in terms of energy production and frequency response (FR) provision) in order to minimize the short term operating cost of the system while matching generation and demand. The demand quantity is the sum of an inflexible deterministic term (denoted for any time instant $t\in[0,T]$ by $\bar{D}(t)$) and of a stochastic component $n\times U_{TCL}(t)$, i.e. the product of the population size $n$ and the average demand profile $U_{TCL}(t)$ of the TCL population.

For simplicity, a Quadratic Programming (QP) formulation in a discrete time setting is adopted for the UC problem. The central planner disposes of $Z$ generation technologies (gas, nuclear, wind) and schedules their production and allocates response by slot of $30$ min every day. For any $j\in\{1,\ldots,Z\}$ and $\ell\in\{1,\ldots,48\}$, the quantities $H_{j}(t_{\ell})$, $G_{j}(t_{\ell})$ and $R_{j}(t_{\ell})$ denote respectively the commitment, the power production and the frequency-response from unit $j$ during the time interval $[t_{\ell},t_{\ell+1}]$ (all expressed in MWh). The associated vectors are denoted by $H(t_{\ell})=[H_{1}(t_{\ell}),\ldots,H_{Z}(t_{\ell})]$,  $G(t_{\ell})=[G_{1}(t_{\ell}),\ldots,G_{Z}(t_{\ell})]$ and $R(t_{\ell})=[R_{1}(t_{\ell}),\ldots,R_{Z}(t_{\ell})]$.

The cost sustained at time $t_\ell $ by unit $j$ is linear with respect to the commitment $H_j(t_\ell)$ and quadratic with respect to generation $G_j (t_l)$ and can be expressed as $c_{1,j}H_j (t_\ell)G^{Max}_{j}(t_\ell)+ c_{2,j}G_j (t_\ell) + c_{3,j}G_j (t_\ell)^2$. In this cost expression, $G^{Max}_{j}$ denotes the production limit allocated by each generation technology, $c_{1,j}$ [\textsterling/MWh] is the no-load cost term, whereas $c_{2,j}$ [\textsterling/MWh] and $c_{3,j}$ [\textsterling/MW$^2$h] are the production cost coefficients of the generation technology $j$.
The optimization of $F_0$  must satisfy the following constraints for all $\ell\in\{1,\ldots,48\}$ and $j\in\{1,\ldots,Z\}$:
\begin{equation}
\label{equ offer demand}
\sum_{j=1}^{Z}G_{j}(t_{\ell})-\int_{t_{\ell}}^{t_{\ell+1}}(\bar{D}(t)+n\,U_{TCL}(t))dt=0,
\end{equation}
\begin{equation}
\label{commit:bounds}
0\leq H_j(t_{\ell})\leq 1,
\end{equation}
\begin{equation}
\label{ineq prod max prod_1}
R_j(t_\ell)-r_j H_j(t_\ell)G_j^{max}(t_\ell)\leq 0,
\end{equation}
\begin{equation}
\label{ineq prod max prod_2}
R_j(t_\ell)-s_j(H_j(t_\ell)G_j^{max}(t_\ell)-G_j(t_\ell))\leq 0,
\end{equation}
\begin{equation}
\label{2e}
\Delta G_L-\Lambda\left(\bar{D}(t_\ell )+n(\bar{U}_{TCL}(t_\ell)-\bar{R}_{TCL}(t_\ell)\right)\Delta f_{qss}^{max} - \hat{R}(t_\ell)\leq 0,
\end{equation}
\begin{equation}
\label{2f}
2\Delta G_L t_{ref}t_d - t_{ref}^2\hat{R}(t_\ell) - 4\Delta f_{ref}t_d\hat{H}(_\ell)\leq 0,
\end{equation}
\begin{equation}
\label{2g}
\bar{q}(t) - \hat{H}(_\ell)\hat{R}(_\ell)\leq 0\,
\end{equation}
\begin{equation}
\label{2h}
\mu\, r_j H_j(t_\ell)G^{max}_j(t_\ell )-G_j(t_\ell)\leq 0,
\end{equation}
where \eqref{equ offer demand} equals production and aggregated demand (i.e. the system inelastic demand $\bar{D}$ and the TCL flexible demand $n U_{TCL}$).
The quantities $\hat{R}$ and $\hat{H}$ denote the total reserve and inertia of the system, respectively, and are defined for any $\ell \in \{ 1,\dots,48 \}$ as: 
\begin{align*}
    \hat{R}(t_\ell) & = \sum_{j=1}^{Z}R_j(t_\ell)+nR_{TCL}(t_\ell),
    \\
    \hat{H}(t_\ell) &= \sum_{j=1}^{Z}\frac{h_jH_j(t_\ell)G_j^{max}-h_L\Delta G_L}{f_0}.
\end{align*}
 In \eqref{commit:bounds} it is supposed that, for any generation technology $j$, the capacity of the single power plant is significantly smaller than the total installed capacity. As a result, it is reasonable to consider the continuous relaxation of the UC problem by assuming $H_j(t_\ell)\in[0,1]$.

The amount of response allocated by each generation technology is limited by the
headroom $r_j H_j (t_\ell)G^{max}_{j}(t_\ell)$ in \eqref{ineq prod max prod_1} and by the slope $s_j$ linking the FR with the dispatch level \eqref{ineq prod max prod_2}.
Constraints \eqref{2e} to \eqref{2h} deal with frequency response provision and $R_{TCL}$ (the mean of FR allocated by TCLs). They guarantee secure frequency deviations following sudden generation loss $\Delta G_L$. Inequality \eqref{2e} allocates enough FR (with delivery time $t_d$) such that the quasi-steady-state frequency remains above $\Delta f_{qss}^{max}$, with $\Lambda$ accounting for the damping effect introduced by the loads \cite{kundur1994power}. The constraint \eqref{2g} imposes the maximum
tolerable frequency deviation $\Delta f_{nad}$, following the formulation
and methodology presented in \cite{teng2015stochastic} and \cite{trovato2018unit}.
The rate of change of frequency is taken into account in \eqref{2f} where at $t_{rcf}$ the frequency deviation remains above $\Delta f_{ref}$.
Constraint \eqref{2h} prevents trivial unrealistic
solutions that may arise in the proposed formulation, such as high values of committed generation $H_j(t_\ell)$
in correspondence with low (even zero) generation dispatch
$G_j(t_\ell)$. The reader can refer to \cite{depaola2019mean} for more details on the UC problem.

The solution $F_0$ of the UC problem
 can be defined by the following optimization problem:
\begin{equation}
F_{0}(U_{TCL},R_{TCL}):=\underset{H,G,R }{\min}\sum_{\ell=1}^{48}
\sum_{j=1}^{Z} c_{1,j}H_{j}(t_{\ell})G_j^{max}(t_\ell)+c_{2,j}G_{j}(t_{\ell})+c_{3,j}G_{j}(t_{\ell})^{2},
\end{equation}
subject to equations \eqref{equ offer demand}-\eqref{2h}.

Note that the formulation of the present problem does not fulfill all the assumptions presented in Section \ref{proposed algo}. In particular, the function $F_0$ is not strictly convex, as instead supposed in Theorem \ref{result of liminf conv}.$($ii$)$.$($iii$)$. Nevertheless, the numerical simulations of Section \ref{dec:impl} shows that the proposed approach is still able to achieve convergence.

Regarding the modelling of the individual price-responsive TCLs, each TCL $i\in\{1,\ldots,n\}$ is characterized at any time $t\in[0,T]$ by its temperature state $X^{i,u^{i}}_{t}$ $[^{\circ}C]$ and by its power consumption control $u^{i}_{t}$  $[W]$. The thermal dynamic $X^{i,u^{i}}_{t}$ of a single TCL $i$ is given by:
\begin{equation}
    \label{temp dyn}
    \left\{
    \begin{array}{l l l  l}
    dX^{i,u^{i}}_{t}&=&-\frac{1}{\gamma_{i}}(X_{t}^{i,u^{i}}-X_{OFF}^{i}
    +\zeta_{i} u^{i}_{t})dt
    +\sigma_{i} \,dW_{t}^{i}, & \mbox{for }t\in[0,T], \\
    X_{0,u^{i}}^{i}     & = & x_{0}^{i}\in\mathbb{R},& 
    \end{array}
    \right.
\end{equation}
where:
\begin{itemize}
\item $\gamma_{i}$ is its thermal time constant $[s]$.
\item $X_{OFF}^{i}$ is the ambient temperature $[^{\circ}C]$.
\item $\zeta_{i}$ is the heat exchange parameter $[^{\circ}C/W]$.
\item $\sigma_{i}$ is a positive constant  $[({^{\circ}C})s^{\frac{1}{2}}]$,
\item $W^{i}$ is a Brownian Motion $[s^{\frac{1}{2}}]$, independent from $W^{j}$ for any $j\neq i$.
\end{itemize}

For any $i\in\{1,\ldots,n\}$, the set of control $\mathcal{U}_{i}$ is defined by:
\begin{equation}
\begin{array}{ll}
\mathcal{U}_{i}:= &\{v:\Omega\times [0,T]\to \mathbb{R},v(\cdot)\mbox{ is }\mathbb{F}-\mbox{prog. measurable},  \\
 & v(\omega)\in \mathbb{U}\mbox{ and }v_{t}(\omega)\in\{0,P_{ON,i}\}, \mbox{ for a.a. }(t,\omega)\in [0,T]\times \Omega\},
\end{array}\vspace{0.1 cm}
\end{equation}
The TCLs dynamics in \eqref{temp dyn} have been derived according to \cite{kizilkale2019integral}, with the addition of the stochastic term $\sigma_i dW_{t}^{i}$ to account for the influence of the environment (opening/closing of the fridge, environment temperature, etc.) on the evolution of the TCL temperature. 

By combining the objective functions of the systems, the system operator
 has to solve the following optimization problem:
\begin{equation}
\label{central planner pb}
\begin{array}{r l}
   (P_{1}^{TCL})  &  
   \left\{
\begin{array}{l}
{\displaystyle \inf_{u\in\mathcal{U}} J(u)}\\
\begin{array}{ll}
 J(u):=
 &\E\left[F_0\left(\frac{1}{n}\sum_{i=1}^n u^i,\frac{1}{n}\sum_{i=1}^n r_i(u^i, X^{i,u^i}) \right) \right]\\
 & +\E\left[\frac{1}{n}\sum_{i=1}^n
\int_{0}^{T}f_{i}(u^{i}_{s},X_{s}^{i,u^{i}})ds 
+\gamma_{i}(X^{i,u^{i}}_{T}-\bar{X}^{i})^{2}\right],
\end{array}
\end{array}
\right .
\end{array}
\end{equation}
The term $r_i(u^i, X^{i,u^i})$ denotes the maximum amount of FR allocated by the TCL $i$ at time $s$ and can be expressed as:
\begin{equation}
r_i(u^i, X^{i,u^i})(s):=u^i_s \frac{X^{i,u^i}_s-X^i_{min}}{X^i_{max}-X^i_{min}}.
\end{equation}
The discomfort term of the single TCL $i$ at time $s$ is denoted by $f_{i}(u^{i}_{s},X_{s}^{i,u^{i}})$, which takes the following expression:
\begin{equation}
\label{f_i:def}
 f_{i}(u^{i}_{s},X_{s}^{i,u^{i}})
 :=  \alpha_{i}\,(X^{i,u^{i}}_{s}-\bar{X}^{i})^{2}
  + \beta_{i}((X_{\min}^{i}-X^{i,u^{i}}_{s})_{+}^{2}+ (X^{i,u^{i}}_{s}-X_{\max}^{i})_{+}^{2}),
 \end{equation}
 where:
 \begin{itemize}
\item $\alpha_{i}(X^{i,u^{i}}_{s}-\bar{X}^{i})^{2}$ is a discomfort term penalizing temperature deviations from some comfort target $\bar{X}$ $[^{\circ}C]$, considering $\alpha_{i} \, [\mbox{\textsterling }/h(^{\circ}C)^{2}]$  as a discomfort term parameter.
\item $\beta_{i}((X^{i,u^{i}}_{s}-X_{\min}^{i})_{+}^{2}
 + (X_{\max}^{i}-X^{i,u^{i}}_{s})_{+}^{2})$ is a penalization term meant to maintain the temperature within the interval $[X_{\min}^{i},X_{\max}^{i}]$, considering the cost parameter $\beta_i$ $[\mbox{\textsterling }/s(^{\circ}C)^{2}]$ and the maximum function $(a)_{+}=\max(0,a)$.
\item $\gamma_{i}(X^{i,u^{i}}_{T}-\bar{X}_{i})^{2}$ is a terminal cost term meant to impose soft periodic constraints by quadratically penalizing the deviations of the final temperature state $X^{i,u^{i}}_{T}$ with respect to the initial temperature value $\bar{X}_{i}$, considering the cost parameter $\gamma$  $[\mbox{\textsterling }/s(^{\circ}C)^{2}]$.
\end{itemize}

Note that the control set $\mathcal{U}$ is not convex. We can mention a possible relaxation of the problem by taking the control in the interval $[0,P_{ON,i}]$.

In order to solve $(P_{1}^{TCL})$, the modified Problem $(P_{2}^{TCL})$ is studied:
\begin{equation}
\label{central planner modified pb}
\begin{array}{r l}
   (P_{2}^{TCL})  &  
   \left\{
\begin{array}{l}
{\displaystyle \inf_{u\in\mathcal{U}} \tilde{J}(u)}\\
{\displaystyle 
\begin{array}{ll}
 \tilde{J}(u):=
 &F_0\left(\frac{1}{n}\sum_{i=1}^n \E\,u^i,\frac{1}{n}\sum_{i=1}^n \E\,r_i(u^i, X^{i,u^i}) \right)
 \\& +\E\left[\frac{1}{n}\sum_{i=1}^n
\int_{0}^{T}f_{i}(u^{i}_{s},X_{s}^{i,u^{i}})ds
+\gamma_{i}(X^{i,u^{i}}_{T}-\bar{X}^{i})^{2}\right].
\end{array}
}
\end{array}
\right .
\end{array}
\end{equation}

\subsection{Decentralized implementation}
\label{dec:impl}

The \textit{Sampled Stochastic Uzawa} Algorithm \ref{alternative algo desc} is applied to solve $(P_{2}^{TCL})$, with $m=317$ simulations per iteration. At each iteration $k$, the lines \ref{opt v comp} and \ref{opt u comp}  of Algorithm \ref{alternative algo desc} correspond to the solution of a deterministic UC problem and of an Hamilton Jacobi Bellman (HJB) equation, respectively. The time steps $\Delta t=7.6$ s and temperature steps $\Delta T=0.15^{\circ}C$ are chosen for the discretization of the HJB equation. Let us note that, at line \ref{opt u comp}, each TCL solves its own local problem on the basis of the received price signal $\lambda^k=(p^k,\rho^k)$:
\begin{equation}
\underset{u^i\in \mathcal{U}_i}{\inf}\int_0^T f_i(u^i_s,X_s^{i,u^i})+u^i_sp^k_s-r_i(u^i,X^{i,u^i})(s)\rho^k_s \,ds,
\end{equation}
where $f_i(u^i_s,X_s^{i,u^i})$ is a discomfort term defined in \eqref{f_i:def}, $u^i_sp^k_s$ can be interpreted as consumption cost and $r_i(u^i,X^{i,u^i})(s)\rho^k_s$  as fee awarded for FR provision. This implementation has a practical sense: each TCL uses local information and the received price signals to schedule its power consumption on the time interval $[0,T]$, with the objective of minimizing its overall costs.
It follows that, with the proposed approach, it is possible to optimize the total system costs in ($P_1^{TCL}$) in a distributed manner, with each TCL acting independently and pursuing its own cost minimization.

\subsection{Results}
In the proposed case study, the considered generation technologies available in the system are nuclear, combined cycle gas turbines (CCGT), open cycle gas turbines (OCGT)
and wind. The characteristics and parameters of the UC in this simulation are the same as in \cite{depaola2019mean}. 

It is assumed that the population of TCLs corresponds to $n = 2\times 10^7$ fridges with
built-in freeze compartment that operate in the system according
to the proposed price-based control scheme. 
For any TCL $i$, we set the consumption parameter $P_{ON,i}=180W$. The values of the TCL dynamic parameters $\gamma_i$ and $X^i_{OFF}$ of \eqref{temp dyn} are equal to the ones considered in \cite{depaola2019mean}. 
The initial temperatures of the TCLS are selected randomly according to a uniform probability distribution, considering temperature values between $-21^{\circ}C$ and $-14^{\circ}C$.
For any TCL $i$, the parameters of the individual cost function $f_i$, defined in \eqref{f_i:def}, are: $\alpha_i = 0.2\times 10^{-4}$ \textsterling /s$(^{\circ}C)^2$, $\beta_i = 50$\textsterling /s$(^{\circ}C)^2$, $\bar{X}^i = -17.5^{\circ}C$ and $X_{max}= -14^{\circ}C$, $X_{min}= -21^{\circ}C$. The parameter $\beta_i$ is intentionally taken very large to ensure that the TCL temperature remains within the interval $[X_{max}^i,X_{min}^i]$. Note that the individual problems solved by the TCLs are distinct than the ones in \cite{depaola2019mean} (different terms and parameters).

Simulations are performed for different volatility values $\sigma_i:=0,1,2$ (all the TCLs have the same volatility in the simulations), with $\sigma_i$ defined as in \eqref{temp dyn}. The \textit{Sampled Stochastic Uzawa} Algorithm is stopped after 75 iterations.

The resulting profiles of total power consumption $U=n\, U_{TCL}$  and total allocated response $R=nR_{TCL}$ by the TCLs population are reported in Figure \ref{conso comp}, while the resulting electricity prices $p$ and response availability prices $\rho$  are shown in Figure \ref{price comp}.
As observed in \cite{depaola2019mean}, the total consumption $U$ is higher when the electricity price $p$ is lower. Conversely, the total allocated response $R$ is higher when the FR remuneration price $\rho$ is also higher. This can be observed in particular during the first hours of the day, between $0$ and $6$ h. The
power consumption $U$ exhibits smaller oscillations during the rest of the day, as the internal temperature of the TCLs is maintained within feasible levels.
Although the prices do not seem to be particularly sensitive with respect to the volatility parameter $\sigma$, the power consumption $U$ and frequency response $R$ are highly correlated to the volatility of the TCLs temperature.

The TCLs impact on system commitment decisions and consequent energy/FR dispatch levels is also analyzed and displayed  in Figures \ref{prod comp} and \ref{fr comp}. 
 In this analysis, the ``flexibility scenario'', obtained with the proposed optimization strategy and considering flexible price-responsive TCLs, is compared to a ``business-as-usual'' scenario where the TCL do not respond to external price signals and do not perform any optimization of their costs. In the ``business-as-usual'', we impose $R_{TCL}(t) = 0$ and we assume that the TCLs operate exclusively according to their internal temperature $X^{i,u^i}$. They switch ON ($u^i(t) = P_{ON,i}$)
when they reach their maximum feasible temperature
$X^i_{max}$ and they switch back OFF again ($u^i(t) = 0$) when
they reach the minimum temperature $X^i_{min}$. 
In Figure \ref{prod comp}, we can clearly observe that TCL's flexibility allows to increase the contribution of wind to the energy balance of the system while decreasing the contribution of CCGT both in energy and frequency response.  In the ``business-as-usual'' scenario, without frequency support by the TCL, the optimal solution envisages a further curtailment of wind output in favor of an increase in CCGT
generation, as wind does not provide any FR. As expected, the influence of the TCL on the system is larger when the temperature volatility is lower. 
\begin{center}
\begin{figure}
\begin{minipage}[t]{.45\linewidth}

\begin{tikzpicture}[font=\scriptsize]

\begin{axis}[ymax = 1350,
legend entries={$U_{\sigma = 0}$,$U_{\sigma = 1}$,$U_{\sigma = 2}$, $R_{\sigma = 0}$,$R_{\sigma = 1}$,$R_{\sigma = 2}$},legend style={at={(1,1)},anchor=north east},legend columns=3,
grid=major,height=7cm,width=7cm,
xlabel={Time (h)} 
]

\addplot [thick, draw=blue] table[x index=0,y index=1]{U.txt};

\addplot [thick, draw=red] table[x index=0,y index=2]{U.txt};

\addplot [thick, draw=black] table[x index=0,y index=3]{U.txt};

\addplot [thick, loosely dashed ,draw=blue] table[x index=0,y index=1]{R_tcl.txt};

\addplot [thick, loosely dashed ,draw=red] table[x index=0,y index=2]{R_tcl.txt};

\addplot [thick, loosely dashed  ,draw=black] table[x index=0,y index=3]{R_tcl.txt};

\end{axis}
\end{tikzpicture}
\caption{Total power consumption $U$ and allocated response $R$ (MW) of the TCLs after 75 algorithm iterations.}
\label{conso comp}
\end{minipage}\hspace*{0.1cm}
\begin{minipage}[t]{.5\linewidth}

\begin{tikzpicture}[font=\scriptsize]
\begin{axis}[ymax = 120,
legend entries={$p_{\sigma = 0}$,$p_{\sigma = 1}$,$p_{\sigma = 2}$, $\rho_{\sigma = 0}$,$\rho_{\sigma = 1}$,$\rho_{\sigma = 2}$},legend style={at={(1,1)},anchor=north east},legend columns=3,
grid=major,height=7cm,width=7cm,
xlabel={Time (h)}
]

\addplot [thick, draw=blue] table[x index=0,y index=1]{p.txt};

\addplot [thick, draw=red] table[x index=0,y index=2]{p.txt};

\addplot [thick, draw=black] table[x index=0,y index=3]{p.txt};

\addplot [thick,loosely dashed,draw=blue] table[x index=0,y index=1]{r.txt};

\addplot [thick ,loosely dashed,draw=red] table[x index=0,y index=2]{r.txt};

\addplot [thick, loosely dashed,draw=black] table[x index=0,y index=3]{r.txt};

\end{axis}

\end{tikzpicture}
\caption{Electricity price $p$ and response availability price $\rho$ (\textsterling /MWh) after 75 algorithm iterations.}
\label{price comp}
\end{minipage}
\end{figure}
\end{center}

\begin{center}
\begin{figure}
\begin{minipage}[t]{.45\linewidth}

\begin{tikzpicture}[font=\scriptsize]

\begin{axis}[ymax = 2150,
legend entries={$CCGT_{\sigma = 0}$,$CCGT_{\sigma = 1}$,$CCGT_{\sigma = 2}$,
$Wind_{\sigma = 0}$,$Wind_{\sigma = 1}$,
$Wind_{\sigma = 2}$}
,legend style={at={(1,1)},anchor=north east},legend columns=2,
grid=major,height=7cm,width=7cm,
xlabel={Time (h)},
]

\addplot [thick,draw=blue] table[x index=0,y index=1]{G_reduced.txt};

\addplot [thick,draw=red] table[x index=0,y index=2]{G_reduced.txt};

\addplot [thick,draw=black] table[x index=0,y index=3]{G_reduced.txt};

\addplot [ thick, loosely dashed , draw=blue] table[x index=0,y index=4]{G_reduced.txt};

\addplot [thick, loosely dashed, draw=red] table[x index=0,y index=5]{G_reduced.txt};

\addplot [thick, loosely dashed, draw=black] table[x index=0,y index=6]{G_reduced.txt};

\end{axis}
\end{tikzpicture}
\caption{
Deviation of generation profiles (MW) from the ``business-as-usual'' scenario during the first hours of the day, considering three different values of temperature volatility $\sigma$.}
\label{prod comp}

\end{minipage}\hspace*{0.2cm}
\begin{minipage}[t]{.5\linewidth}

\begin{tikzpicture}[font=\scriptsize]

\begin{axis}
[ymax = 200,
legend entries={
 $CCGT_{\sigma = 0}$,
$CCGT_{\sigma = 1}$,$CCGT_{\sigma = 2}$,$CCGT_{no flex}$
}
,legend style={at={(1,1)},anchor=north east}, legend columns=2,
grid=major,height=7cm,width=7cm,
xlabel={Time (h)},
]
\addplot [thick, draw=blue] table[x index=0,y index=1]{FR_reduced.txt};

\addplot [draw=red] table[x index=0,y index=2]{FR_reduced.txt};

\addplot [draw=black] table[x index=0,y index=3]{FR_reduced.txt};

\end{axis}
\end{tikzpicture}
\caption{
Deviation of Frequency Response (MW) allocated by CCGT technology with respect to the ``business-as-usual'' scenario during the first hours of the day,  considering three different values of temperature volatility $\sigma$.}
\label{fr comp}
\end{minipage}
\end{figure}
\end{center}

A comparison of the system costs (i.e. UC solution) between the ``flexibility scenario'' (FS) and the ``Business-as-usual'' (BAU) framework
is provided in Table \ref{syst cost}.
As expected, costs are lower in the FS, as the flexibility of the TCLs positively supports system operation, allowing to replace gas generation from OCGT and CCGT plants with cheaper wind energy. The reduction is higher (about 1.9\%) for $\sigma = 0$ with respect to the cases with $\sigma = 1$ or $\sigma =2$ (about 1.6\% and 1.2\%, respectively). This confirms previous indications that TCLs tend to be more flexible when the volatility of their internal temperature is lower.
\begin{table}

 \begin{center}
\begin{tabular}{|c|c|c|c|}
  \hline
  & $\sigma = 0$ & $\sigma = 1$ & $\sigma = 2$ \\
    \hline  
BAU & $2.770\times 10^7$ &$2.770\times 10^7$ & $2.772\times 10^7$\\
  \hline    
FS & $2.719\times 10^7$ & $2.725 \times 10^7$& $2.740\times 10^7$\\
  \hline
\end{tabular}
\end{center}
 \caption {Minimized system costs in (\textsterling )} \label{syst cost} 
\end{table}

\section{Conclusions}
Randomness and high dimensionality usually make the resolution of an optimization problem quite difficult. However, in the specific case of convex aggregative control problems, we have shown that, under independent noise assumptions, one can take advantage of the high dimension to approximate accurately the original Problem $(P_1)$ by a decentralized Problem $(\hat{P}_2)$, whose numerical resolution is more tractable. We highlight the fact that the approximation error is of order $\frac{1}{n}$, where $n$ is the number of agents. The extension of this approach to stochastic control problems with common noise or to non convex problems may be challenging but interesting topics for further work.

\appendix
\section{Appendix}\label{append}

\begin{lemma}\label{quad_growth_lemma}
Let $H$ be a Hilbert space and $f:H\to \mathbb{R}$ be l.s.c. and convex. The function $f$ has at most quadratic growth if and only if its subgradient has linear growth.
\end{lemma}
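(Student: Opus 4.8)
The plan is to prove the two implications separately, after fixing the convention (used implicitly in the paper when it passes from ``$F_0$ has at least quadratic growth'' to ``$F_0^\ast$ has at most quadratic growth'') that \emph{$f$ has subquadratic growth} means there are constants $a,b\ge 0$ with $f(x)\le a+b\|x\|_H^2$ for all $x\in H$, and \emph{$\partial f$ has linear growth} means there are constants $c,d\ge 0$ with $\|s\|_H\le c+d\|x\|_H$ for all $x\in H$ and all $s\in\partial f(x)$. Since $f$ is convex, real-valued and l.s.c.\ on a Hilbert space, it is continuous and $\partial f(x)\neq\emptyset$ for every $x$; in particular $f$ has an affine minorant, so a one-sided quadratic bound on $f$ is equivalent to a two-sided one. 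I would record this at the start.

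For the easy implication (linear growth of $\partial f$ $\Rightarrow$ subquadratic growth of $f$) I would argue directly from the subgradient inequality, with no integration. Fix $s_0\in\partial f(0)$; then $f(x)\ge f(0)+\langle s_0,x\rangle\ge f(0)-\|s_0\|\,\|x\|$, an affine lower bound. For the upper bound, pick $s_x\in\partial f(x)$; the subgradient inequality at $x$ evaluated at $0$ reads $f(0)\ge f(x)+\langle s_x,0-x\rangle$, i.e.\ $f(x)\le f(0)+\langle s_x,x\rangle\le f(0)+\|s_x\|\,\|x\|\le f(0)+(c+d\|x\|)\|x\|$, which is $\le a+b\|x\|^2$ for suitable $a,b$. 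This settles one direction in two lines.

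The substantial direction is subquadratic growth of $f$ $\Rightarrow$ linear growth of $\partial f$ (this is the one invoked in \eqref{f bounded}). Fix $s_0\in\partial f(0)$, so $f(y)\ge f(0)-\|s_0\|\,\|y\|$ for all $y$, and note $f(0)\le a$ from the growth bound. Take $x\in H$, $s\in\partial f(x)$, and assume $s\neq 0$. For $r>0$ put $y:=x+r\,s/\|s\|$; the subgradient inequality gives $f(y)\ge f(x)+r\|s\|$, whence
\begin{equation*}
r\|s\|\le f(y)-f(x)\le\big(a+b\|y\|^2\big)-\big(f(0)-\|s_0\|\,\|x\|\big)\le C(x)+2br\|x\|+br^2,
\end{equation*}
where $C(x):=(a-f(0))+\|s_0\|\,\|x\|+b\|x\|^2\ge 0$. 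Dividing by $r$ yields $\|s\|\le C(x)/r+2b\|x\|+br$ for every $r>0$; minimizing the right-hand side over $r>0$ (optimal $r=\sqrt{C(x)/b}$) gives $\|s\|\le 2b\|x\|+2\sqrt{b\,C(x)}$. Finally $C(x)\le C_1+C_2\|x\|^2$ (absorb $\|s_0\|\,\|x\|$ via $\|x\|\le 1+\|x\|^2$), and $\sqrt{u+v}\le\sqrt u+\sqrt v$ gives $\|s\|\le c+d\|x\|$ with $c,d$ depending only on $a,b,f(0),\|s_0\|$.

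The main obstacle is exactly this last step: the naive test point $y=x+s/\|s\|$ (unit step) only produces a \emph{quadratic} bound on $\|s\|$, so one genuinely has to keep the step length $r$ free and optimize it — that is what converts quadratic growth of $f$ into linear growth of its slope. The remaining ingredients (continuity of $f$ and everywhere non-emptiness of $\partial f$, and the elementary inequalities simplifying $C(x)$) are routine and I would only mention them briefly.
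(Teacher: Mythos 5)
Your proof is correct and follows essentially the same route as the paper's: the easy direction is the identical subgradient-inequality-at-$0$ computation, and the hard direction rests on the same key idea of testing the subgradient inequality at $y=x+r\,s/\|s\|_H$ with the step length $r$ left free. The only difference is presentational — you optimize $r$ explicitly to get quantitative constants $\|s\|_H\le c+d\|x\|_H$, whereas the paper chooses $\alpha$ along a sequence and concludes by contradiction; your direct version is, if anything, cleaner and covers all $x$ uniformly.
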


\begin{proof}
  Let the subgradient have linear growth, that is,
  $\Vert q\Vert_H \leq c_1(1+\Vert x \Vert_H)$
  whenever $x \in H$ and $q\in \partial f(x)$.
  Then
  $f(x) \leq f(0) + \langle q, x \rangle_H
  \leq f(0) + \Vert q\Vert_H \Vert x\Vert_H
  \leq c_2 (1+\Vert x \Vert_H^2)$, so that $f$
  has at most quadratic growth.

Conversely, let $f$ have at most quadratic growth. Since $f$ is convex, one has for all $x\in H$ and $q_0\in \partial f(0)$:
$$
f(x)\geq f(0) + \langle q_0,x \rangle_H \geq  -c_3(1+ \Vert x \Vert_H^2),
$$
where $c_3>0$ depends only on $f(0)$ and $q_0$. Then, using the growth assumption on $f$ and the inequality above, one gets for all $x,y\in H $ and $q\in\partial f(x)$:
$$
c_4(1+ \Vert y \Vert_H ^2) \geq f(y) \geq f(x) + \langle q, y-x \rangle_H
\geq -c_3(1+\Vert x \Vert_H^2) + \langle q, y-x \rangle_H.
$$
Take $y = x + \alpha q$, with $\alpha\in (0,1)$, we get
$$
2c_4(1 + \Vert x \Vert_H^2 + \alpha^2 \Vert q \Vert_H^2)\geq 
-c_3(1+\Vert x \Vert_H^2) + \alpha\Vert q\Vert_H^2
$$
so that
$( \alpha - 2c_4\alpha^2) \Vert q\Vert_H^2
\leq (2c_4+c_3)(1+\Vert x \Vert_H^2)$. 
Take $\alpha = 1/(4c_4)$, then
$\alpha - 2c_4 \alpha^2 =1 /(8c_4) >0$ and then
$$
 \Vert q \Vert_H^2\leq 
 8c_4(2c_4+c_3)(1+ \alpha\Vert x\Vert_H^2)
$$
and the conclusion follows.
\end{proof}

 \end{document}
\today}